\def\sqr#1#2{{\vcenter{\vbox{\hrule height.#2pt
              \hbox{\vrule width.#2pt height#1pt \kern#1pt \vrule width.#2pt}
          \hrule height.#2pt}}}}
\def\dbF{\hbox{\rm l\negthinspace F}}
\def\dbP{\hbox{\rm l\negthinspace P}}
\def\dbR{\hbox{\rm l\negthinspace R}}
\def\a{\alpha}
\def\b{\beta}
\def\e{\varepsilon}
\def\l{\lambda}
\def\cF{{\cal F}}
\def\cU{{\cal U}}
\def\sqr#1#2{{\vcenter{\vbox{\hrule height.#2pt
              \hbox{\vrule width.#2pt height#1pt \kern#1pt \vrule width.#2pt}
              \hrule height.#2pt}}}}
\def\dbR{{\mathop{\rm l\negthinspace R}}}
\def\3n{\negthinspace \negthinspace \negthinspace }
\def\2n{\negthinspace \negthinspace }
\def\1n{\negthinspace }
\def\dbF{{\mathop{\rm l\negthinspace F}}}
\def\dbP{{\mathop{\rm l\negthinspace P}}}
\def\dbR{{\mathop{\rm l\negthinspace R}}}
\def\={\buildrel \triangle \over =}
\def\a{\alpha}
\def\b{\beta}
\def\e{\varepsilon}
\def\l{\lambda}
\def\cF{{\cal F}}
\def\cU{{\cal U}}
\def\no{\noindent}
\def\ms{\medskip}
\def\q{\quad}
\def\qq{\qquad}
\def\esssup{\mathop{\rm esssup}}
\def\max{\mathop{\rm max}}
\def\min{\mathop{\rm min}}
\def\sup{\mathop{\rm sup}}
\def\esssup{\hbox{\rm ess$\,$\rm sup$\,$}}
\def\ae{\hbox{\rm a.e.{ }}}
\def\as{\hbox{\rm a.s.{ }}}
\def\|{\Big |}
\def\({\Big (}
\def\){\Big )}
\def\[{\Big[}
\def\]{\Big]}
\def\be{\begin{equation}}
\def\bel{\begin{equation}\label}
\def\ee{\end{equation}}
\def\bt{\begin{theorem}}
\def\bcd{\begin{condition}}
\def\ecd{\end{condition}}
\def\et{\end{theorem}}
\def\bc{\begin{corollary}}
\def\ec{\end{corollary}}
\def\bde{\begin{definition}}
\def\ede{\end{definition}}
\def\bl{\begin{lemma}}
\def\el{\end{lemma}}
\def\bp{\begin{proposition}}
\def\ep{\end{proposition}}
\def\br{\begin{remark}}
\def\er{\end{remark}}
\def\ba{\begin{array}}
\def\ea{\end{array}}
\def\ed{\end{document}}
\def\square#1{\vbox{\hrule\hbox{\vrule height#1%
     \kern#1\vrule}\hrule}}
\def\rectangle#1#2{\vbox{\hrule\hbox{\vrule height#1%
     \kern#2\vrule}\hrule}}
\def\qed{\hfill \vrule height7pt width3pt depth0pt}
\newcommand{\ind}{1\!\!1}
\font\tenbb=msbm10 \font\sevenbb=msbm7 \font\fivebb=msbm5
\newtheorem{lemma}{Lemma}[section]
\newtheorem{remark}{Remark}[section]
\newtheorem{theorem}{Theorem}[section]
\newtheorem{corollary}{Corollary}[section]
\newtheorem{definition}{Definition}[section]
\newtheorem{proposition}{Proposition}[section]
\newtheorem{condition}{Assumption}[section]
\def \l{\left}
\def \r{\right}
\newcommand{\Prob}{\operatorname{\mathbb{P}}}
\begin{document}

\title{\bf A characterization of sub-game perfect Nash equilibria  for SDEs of mean field type}

\author{Boualem Djehiche\thanks{Department of Mathematics, KTH Royal Institute of Technology, SE-100 44 Stockholm, Sweden.  E-mail: boualem@math.kth.se.}\,\,\footnote{Financial support from the Swedish Export Credit Corporation (SEK)   is gratefully acknowledged. Many thanks to Tomas Bj\"ork and Georges Zaccour for fruitful discussions and their insightful comments.}\qquad and \qquad Minyi Huang\thanks{ School of Mathematics and Statistics, Carleton University,
1125 Colonel By Drive, Ottawa, ON K1S 5B6, Canada. E-mail:  mhuang@math.carleton.ca.}}

\date{}

\maketitle

\begin{abstract} 
We study a class of dynamic decision problems of mean field type with time inconsistent cost functionals, and derive a stochastic maximum principle to characterize subgame perfect Nash equilibrium points. Subsequently, this approach is extended to a mean field game to construct decentralized strategies and obtain an estimate of their performance.
\end{abstract}

\begin{center}
\begin{minipage}{14.6cm}
{\bf Keywords.} time inconsistent stochastic control, maximum principle, mean-field SDE, Nash equilibrium, mean field game.
\end{minipage}
\end{center}

\begin{center}
\begin{minipage}{14.6cm}
{\bf Abbreviated title.} Nash equilibria  for SDEs of mean field type
\end{minipage}
\end{center}

\begin{center}
\begin{minipage}{14.6cm}
\textbf{AMS subject classification.}  93E20, 60H30, 60H10, 91B28.
\end{minipage}
\end{center}

\section{Introduction}

In dynamic decision making problems a policy is time consistent if whenever it is optimal at time $t$, it remains optimal when implemented at a later time $s>t$. In optimal control this is known as the Bellman principle. A time inconsistent policy need not be optimal at later time $s>t$, even if it is optimal at time $t$. Time inconsistency  occurs for example when a hyperbolic discount rate is preferred to an exponential discount rate or when the performance criterion is a nonlinear function of the expected utility such as the variance in the standard Markowitz investment problem. For a recent review of time consistency in dynamic decision making problems we refer to Ekeland and  Lazrak (2006), Ekeland and Pirvu (2008), and Zaccour (2008).

 In his work on a deterministic Ramsay problem, Strotz (1955) was the first to formulate the dynamic time inconsistent decision problem as a game theoretic problem where it is natural to look for subgame perfect Nash equilibria. Pollak (1968), Phelps and Pollak (1968), Peleg and Menahem (1973) and Goldman (1980) extended this framework to discrete and continuous time dynamics. The recent works by Ekeland and Lazrak (2006) and Ekeland and Pirvu (2008) apply this game theoretic approach to an optimal investment and consumption problem under hyperbolic discounting for deterministic and stochastic models. Among their achievements, they provide a precise definition of the equilibrium concept in continuous time, using a Pontryagin type ``spike variation" formulation (that we recall in Section \ref{sec:notation} below) and derive among other things, an extension of the Hamilton-Jacobi-Bellman (HJB) equation along with a verification theorem that characterizes Markov (or feedback type) subgame perfect Nash equilibria. Their work is extended by Bj\"ork and Murgoci (2008) and Bj\"ork, Murgoci and Zhou (2014) to performance functions that are nonlinear functions of expected utilities for dynamics driven by a quite general class of Markov processes. Hu {\it et al.} (2012) followed by Bensoussan {\it et al.} (2013) characterize subgame perfect Nash equilibria using a Pontryagin type stochastic maximum principle (SMP) approach to a time inconsistent  stochastic linear quadratic control problem of mean-field type, where the  performance functional is a  conditional expectations with respect to  the history ${\cal F}_t$ of the system up to time $t$. They derive a general sufficient condition for equilibria through a new class of  flows of forward-backward stochastic differential equations (FBSDEs). The properties of this class of  flows of FBSDEs are far from being well understood and deserve further investigation. Both the extended HJB equation provided in Bj\"ork and Murgoci (2008) and  Bj\"ork, Murgoci and Zhou (2014) and  the sufficient condition suggested by Hu {\it et al.} (2012) give explicit expression of the equilibria only in very few cases. In a more recent work, Yong (2013b) studied a class of  linear-quadratic models with very general weight matrices in the cost, and time-consistent equilibrium control is constructed by the stochastic maximum principle approach and Riccati equations. Yong (2013b) also considered closed-loop equilibrium strategies by discretization of time for the game.

In this paper we suggest an SMP approach to time inconsistent decision problems for dynamics that is driven by diffusion processes of mean field type that are not necessarily Markov and whose performance criterion is  a nonlinear function of the conditional expectation of a utility function, given the present location of the state process. We do not condition on the whole history ${\cal F}_t$ of the system as in Hu {\it et al.} (2012) because for all practical purposes, in the best conditions, the decision-maker  can only observe the current state of the system. She can never provide a complete and explicit form of the history ${\cal F}_t$ (which is a $\sigma$-algebra) of the system, simply because this is a huge set of information, except in trivial situations.  Our model generalizes the one studied in Ekeland and  Pirvu (2008) and Bj\"ork {\it et al.} (2008, 2014).

 In the first main result of the paper the subgame perfect Nash equilibria (not necessarily of feedback type) are fully characterized as maximizers of the Hamiltonian associated with the system in a similar fashion as in the SMP for diffusions of mean field type obtained in Andersson and Djehiche (2010) and Buckdahn {\it et al.} (2011). This approach is illustrated by several examples and the explicit solutions are obtained.

 Next, we address the time-inconsistency issue in a mean field game setting which involves $N$ players in decision making.
In a mean field game, the players are individually insignificant and interact via an aggregate effect (called the mean field effect) generated by the population.
There has existed a substantial literature on this class of games.  Huang, Caines, and Malham\'e (2003, 2006, 2007) introduced an approach based on consistent mean field approximations to design decentralized strategies where each player solves a localized optimal control problem by dynamic programming.
 These strategies have an $\varepsilon$-Nash equilibrium property when applied to a large but finite population. Closely related developments were presented by Lasry and Lions (2007) who introduced the name mean field game, and Weintraub, Benkard, and Van Roy (2008) studied oblivious equilibria in a Markov decision setup.
Within the linear quadratic setup, various explicit solutions can be obtained; see e.g.  Huang, Caines, and Malham\'e (2007), Li and Zhang (2008), Bardi (2012), Bensoussan {\it et al.} (2011). Tembine, Zhu and Basar (2011) introduced risk sensitive costs for mean field games, and analyzed the linear exponential quadratic Gaussian model in detail.  For games with dynamics modelled by nonlinear diffusions, Carmora and Delarue (2013) developed a probabilistic approach, and  Kolokoltsov, Li and Yang (2011) presented a very general mean field game modeling framework via nonlinear Markov processes. Gomes, Mohr and Souza (2010) considered games with discrete time and  discrete states.
For additional information, the reader may consult an overview of this area  by Buckdahn, Cardaliaguet, and Quincampoix (2011), and  Bensoussan, Frehse, and Yam (2012).

To display an overall picture of various past developments in a mean field context, we briefly remark on the difference between mean field type optimal control and mean field games. For the former (see e.g. Andersson and Djehiche (2010), Elliot, Li and Ni (2013), Yong (2013a)), there is only a single decision maker who can instantly affect the mean of the underlying state process.
In contrast, a player in a mean field game with all comparably small players (called peers) has little influence on a mean field term such as $X^{(N)}=\frac{1}{N}\sum_{i=1}^N X_i$ although as $N\rightarrow \infty$, $X^{(N)}$ may asymptotically agree with the mean of a representative agent in a uniform population. An exception is games with a major player whose control can affect everyone notably; see e.g. Huang (2010), Nourian and Caines (2013).

So far most existing research on mean filed games deals with time consistent cost functionals. The state feedback strategies based on consistent mean filed approximations are subgame perfect  in the infinite population limit model and so no individual has the incentive to revise its strategy when time moves forward.  In a recent work, Bensoussan, Sung and Yam (2013) considered time-inconsistent quadratic cost functionals in a mean field game with a continuum population and linear dynamics. A so-called time consistent optimal strategy is  derived  based on spike variation which is followed by a consistency condition on the mean field generated by an infinite population.

The mean field game which we will analyze involves nonlinear dynamics and each player is cost coupled with others by their average state $X^{(-i)}=\frac{1}{N-1}\sum_{k\ne i}^N X_k$. Time inconsistency arises from the conditioning in the cost functional. Our approach for strategy design is to use a freezing idea so that the coupling term is approximated by a deterministic function $\bar X$. This naturally introduces an optimal control problem with time inconsistent cost which in turn is handled by the SMP approach.
After finding the equilibrium strategy for the limiting  control problem,   we determine $\bar X$ by a consistency condition. The remaining important issue is to analyze the performance of the obtained strategies when applied by $N$ players.

 The organization of the paper is as follows.  In Section 2, we state the SMP approach for our game problem and the associated adjoint equations.
 Section \ref{sec:smp} characterizes the equilibrium point by an SMP (Theorem \ref{MP1}).
 Section \ref{sec:app} is devoted to some examples illustrating the main results.
In Section \ref{sec:game} we extend the previous results to a system of $N$ decision makers (Theorem \ref{theorem:asy}). Section \ref{sec:proof} provides the proof of Theorem \ref{theorem:asy}.
Section \ref{sec:mflq} presents explicit computations in a  mean field LQG game with time inconsistent costs.

 To streamline the presentation, we only consider the one dimensional case for the state. The extension to the multidimensional case is by now straightforward. For the reader's convenience, we note a convention on notation. The analysis of the mean field game uses $C$ as a generic constant which may change from place to place, but depends on neither the population size $N$ nor the parameter $ \varepsilon$ of the spike variation.

\section{Notation and statement of the problem}\label{sec:notation}

 Let $T>0$ be a fixed time horizon and $(\Omega,{\cal{F}},\dbF, \dbP)$  be a given filtered
probability space  whose filtration $\dbF=\{{\mathcal{F}}_s,\ 0\leq s \leq T\}$
satisfies the usual conditions of right-continuity and completeness, on which a one-dimensional standard
Brownian motion $W=\{W_s\}_{s\geq0}$ is given. We assume that $\dbF$ is the natural filtration of $W$ augmented by $\dbP$-null sets of ${\cal F}.$

 An admissible strategy $u$ is an $\dbF$-adapted and square-integrable process with values in a non-empty subset $U$ of $\dbR$. We denote the set of all admissible strategies over $[0,T]$  by $\mathcal{U}[0,T]$.

  For each admissible strategy $u\in \mathcal{U}[0,T]$, we consider the  dynamics given by the following SDE of mean-field type, defined on $(\Omega,{\cal{F}},\dbF, \dbP)$,
   \begin{equation}\label{SDE}
\left\{\begin{array}{lll}
dX^{u}(s)= b(s,X^{u}(s),E[X^{u}(s)],u(s))ds+\sigma(s,X^{u}(s),E[X^{u}(s)],u(s))dW(s),\, 0<s\le T, \\
X^{u}(0)=x_0 \,\,(\in \dbR),
\end{array}
\right.
\end{equation}

 We consider decision problems related to the following cost functional
\begin{equation}\label{J-1}
J(t,x,u)=E\left[\int_t^T h\l(s,X^{u,t,x}(s),E[X^{u,t,x}(s)],u(s)\r)
ds+g\l(X^{u,t,x}(T),E[X^{u,t,x}(T)]\r)\r],
\end{equation}
associated with the state process $X^{u,t,x}$, parametrized by  $(t,x)\in[0,T]\times \dbR$,  whose dynamics is given by the SDE
 \begin{equation}\label{SDEu}
\left\{\begin{array}{lll}
dX^{u,t,x}(s)= b(s,X^{u,t,x}(s),E[X^{u,t,x}(s)],u(s))ds+\sigma(s,X^{u,t,x}(s),E[X^{u,t,x}(s)],u(s))dW(s),\, t<s\le T, \\
X^{u,t,x}(t)=x \,\,(\in \dbR),
\end{array}
\right.
\end{equation}
where,
\begin{equation*}\begin{array}{lll}
b(s,y,z,v),\,\, \sigma(s,y,z,v),\,\, h(s,y,z,v): \,\,[0,T] \times \dbR\times\dbR\times U\longrightarrow \dbR,\\ g(y,z): \,\,\dbR\times\dbR\longrightarrow \dbR,
\qq s\in[0,T],\, y\in\dbR,\,  z\in\dbR,\, v\in U.
\end{array}
\end{equation*}
We note that $X^{u,0,x_0}=X^{u}$.

 The nonlinearity of the cost functional  (\ref{J-1}) in the term $E[X^{u,t,x}(T)]$ makes the system (\ref{SDEu})-(\ref{J-1}) time-inconsistent in the sense that the Bellman Principle for optimality does not hold, i.e., the $t$-optimal policy
\begin{equation}\label{opt2-u}
u^*(t,x,\cdot):= \arg\min_{u\in\mathcal{U}}J(t,x,u),
\end{equation}
may not be optimal after $t$:

 The restriction of  $u^*(t,x,\cdot)$
on $[t', T]$ is not equal to  $ \arg\min_{u}J(t',x',u)$ for some $t'>t$ when the state process is steered to $x'$ by $u^*$.  Therefore, as noted by Ekeland, Lazrak and Pirvu (2006)-(2008), time inconsistent optimal solutions (although they exist mathematically) are irrelevant in practice. The decision-maker would not implement the $t$-optimal policy at a later time, if he/she is not forced to do so. The review paper by  Zaccour (2008) gives a nice guided tour to the concept of time consistency in differential games.

 Following Ekeland {\it et al.} (2006)-(2008), and Bj\"ork and Murgoci (2008), we may view the problem as a game and look for a {\bf sub-game perfect Nash  equilibrium point} $\hat u$ in the following sense:
\begin{itemize}
\item Assume that all players (selves) $s$, such that $s>t$, use the strategy $\hat u(s)$.
\item Then it is optimal for player (self) $t$ to also use $\hat u(t)$.
\end{itemize}

 When  the players use feedback strategies,  depending on $t$ and on the position $x$ in space, player $t$ will choose a strategy of the form $u(t):=\varphi (t, x)$, where $\varphi$ is deterministic function, so the action chosen by player $t$ is given by the mapping $x\longrightarrow \varphi (t, x)$. The cost to player $t$ is given  by the functional $J(t, x, \varphi)$.  It is clear that $J(t, x, \varphi)$ does not depend on the actions taken by any player $s$ for $s < t$, so in fact $J$ does only depend on the restriction of the strategy $u$ to the time interval $[t, T]$. The strategy $\varphi$ can thus be viewed as a complete description of the chosen strategies of all players in the game.

 If feedback  strategies are to be used, a deterministic function $\hat \varphi:\, [0,T]\times\dbR\longrightarrow U$  is a  sub-game perfect Nash  equilibrium point when the following actions are performed:
\begin{itemize}
\item Assume that all players (selves) $s$, such that $s>t$, use the strategy $\hat \varphi(s,\cdot)$.
\item Then it is optimal for player (self) $t$ to also use $\hat  \varphi(t,\cdot)$.
\end{itemize}


 To characterize the equilibrium strategy $\hat u$, Ekeland {\it et al.} (2007)-(2008) suggest the following definition that uses a ``local" spike variation in a natural way.

  Define the admissible strategy $u^{\e}$ as the ``local" spike variation of a given admissible strategy $\hat u\in\mathcal{U}[0,T]$  over the set $[t,t+\e]$,
\be\label{spike}
u^{\e}(s):=\left\{\begin{array}{ll} u(s),\,\,\,\; s\in [t,t+\e],\\ \\ \hat u(s),\,\,\,\; s\in [t,T]\setminus [t,t+\e],\end{array}\right.
\ee
where $u\in \mathcal{U}[0,T]$ and $t\in[0,T]$ are arbitrarily chosen.

  Hu {\it et al.} (2012) suggest the following open-loop form of the local spike variation:

\be\label{spike-1}
u^{\e}(s):= \hat u(s)+\nu\ind_{[t,t+\e]}(s),\,\,\,\;  s\in [t,T],
\ee
where $\nu\in U$ is arbitrarily chosen such that  for each $s\in[t,t+\e], u^{\e}(s)\in U$ . This is a particular case of  (\ref{spike}), where the arbitrary admissible strategy $u$ over $[t,t+\e]$  is {\it  a deviation} from the equilibrium point $\hat u$  in the direction $\nu$ i.e. $u(s)=\hat u(s)+\nu$. Although this form is  suitable only  when  $U$ is a linear space, it has the advantage of imposing weaker integrability conditions on the admissible strategies since
\be\label{spike-3}
u^{\e}(s)-\hat u(s)=\nu\ind_{[t,t+\e]}(s),\,\,\,\;  s\in [t,T].
\ee

  For either form of local spike variation, we have the following
\medskip
\begin{definition}\label{SGPE} The admissible strategy $\hat u$ is a {\bf sub-game perfect Nash equilibrium}  for the system (\ref{J-1})-(\ref{SDEu}) if
\begin{equation}\label{equ-u}
\lim_{\e\downarrow 0}\frac{J(t,x,\hat u)-J(t,x, u^{\e})}{\e}\le 0
\end{equation}
for all $u\in \mathcal{U}[0,T]$, $x\in\dbR$ and $\ae t \in [0,T]$.
 The corresponding  equilibrium dynamics solves the SDE
\be\label{equ-SDE}\
\left\{\begin{array}{lll}
dX^{\hat u}(s)= b(s,X^{\hat u}(s),E[X^{\hat u}(s)],\hat u(s))ds+\sigma(s,X^{\hat u}(s),E[X^{\hat u}(s)],\hat u(s))dW(s),\, 0<s\le T,\\
X^{\hat u}(0)=x_0.
\end{array}
\right.
\ee
\end{definition}

 If feedback  strategies are to be used,  the previous definition reduces to the following

\begin{definition}\label{SGPE-feedback} A deterministic function  $\hat \varphi:\, [0,T]\times\dbR\longrightarrow U$ is a {\bf sub-game perfect Nash equilibrium}  for the system (\ref{J-1})-(\ref{SDEu}) if
\begin{equation}\label{equ-uf}
\lim_{\e\downarrow 0}\frac{J(t,x,\hat u)-J(t,x, u^{\e})}{\e}\le 0
\end{equation}
for all $u\in \mathcal{U}[0,T]$, $x\in\dbR$ and $\ae t \in [0,T]$, where $\hat u(s):=\hat \varphi(s,\bar X(s) )$, $0\le s\le T$ and $\bar X$ is given by \eqref{equ-SDE-feedback}.
 The associated equilibrium dynamics solves the SDE
\be\label{equ-SDE-feedback}
\left\{\begin{array}{lll}
d\bar X(s)= b(s,\bar X(s),E[\bar X^(s)],\hat \varphi(s,\bar X(s) ))ds+\sigma(s,\bar X(s),E[\bar X(s)],,\hat \varphi(s,\bar X(s) ))dW(s),\, 0<s\le T,\\
\bar X(0)=x_0.
\end{array}
\right.
\ee
\end{definition}

 For brevity, sometimes we  simply call $\hat u$ an equilibrium point when there is no ambiguity.

  The purpose of this study is to characterize sub-game perfect Nash equilibria for the system (\ref{SDEu})-(\ref{J-1}) by evaluating the limit (\ref{equ-u}) in terms of a stochastic maximum principle criterion. We will apply the general stochastic maximum principle for SDEs of mean-field type derived in Buckdahn {\it et al.} (2011) \cite{BDL}.

 The following assumptions (imposed in \cite{BDL}) will be in force throughout this paper. These assumptions can be made weaker, but we do not focus on this here.

 \bcd\label{cond1}
\no
\begin{itemize}
\item [$(i)$] The functions $b, \sigma, h,g$ are continuous in $(y,z,u)$, and bounded.
\item [$(ii)$] The functions $b, \sigma, h, g$ are twice continuously differentiable with respect to $(y,z)$, and their derivatives up to the second order are continuous in $(y,z, u)$, and bounded.
\end{itemize}
\ecd

Although we are interested in characterizing sub-game perfect Nash equilibrium points by considering the action of player $t$ at a deterministic position $x$, we perform the analysis for the more general case where player $t$ has a random variable $\xi\in L^2(\Omega,\cF_t, \dbP; \dbR)$ as a state.

 For a given admissible strategy $u\in \mathcal{U}[0,T]$, if player $t$ has $\xi\in L^2(\Omega,\cF_t, \dbP; \dbR)$ as state, (\ref{SDEu}) becomes
\begin{equation}\label{SDEu-flow}
\left\{\begin{array}{lll}
dX^{u,t,\xi}(s)= b(s,X^{u,t,\xi}(s),E[X^{u,t,\xi}(s)],u(s))ds+\sigma(s,X^{u,t,\xi}(s),E[X^{u,t,\xi}(s)],u(s))dW(s),\,\,  t< s\le T,\\
X^{u,t,\xi}(t)=\xi,
\end{array}
\right.
\end{equation}
and the associated  cost functional (\ref{J-1}) becomes
\begin{equation}\label{J-1-flow}
J(t,\xi,u)=E\left[\int_t^T h\l(s,X^{u,t,\xi}(s),E[X^{u,t,\xi}(s)],u(s)\r)
ds+g\l(X^{u,t,\xi}(T),E[X^{u,t,\xi}(T)]\r)\r].
\end{equation}

\begin{remark} \label{remark:Jxi}
 Definitions  \ref{SGPE} and \ref{SGPE-feedback} can be accordingly generalized by replacing $(t,x)$ by $(t,\xi)$ and the inequality condition takes the form
\begin{equation}\label{equ-u-xi}
\lim_{\e\downarrow 0}\frac{J(t,\xi,\hat u)-J(t,\xi, u^{\e})}{\e}\le 0
\end{equation}
for all $u\in {\cal U}[0,T]$, $\xi\in L^2(\Omega,  {\cal F}_t, {\dbP}; { \dbR})$ and a.e. $t\in [0,T]$.
  \end{remark}

 It is a well-known fact, see e.g. Karatzas and Shreve (\cite{KS}, pp. 289-290), that under Assumption (\ref{cond1}), for any $u\in \mathcal{U}[0,T]$,  the SDE (\ref{SDEu-flow}) admits a unique strong solution. Moreover, there exists a constant $C>0$ which depends only on the bounds of $b,\sigma$ and their first derivatives w.r.t. $y,z$, such that, for any $t\in[0,T], \, u\in \mathcal{U}[0,T]$ and $\xi, \xi^{\prime}\in L^2(\Omega,\cF_t, \dbP; \dbR)$, we also have the following estimates, $\dbP-\as$
 \be\label{estimate-flow}\begin{array}{lll}
 E[\sup_{t\le s\le T}|X^{u,t,\xi}(s)|^2|\cF_t]\le C(1+|\xi|^2+E[|\xi|^2]),\\
 E[\sup_{t\le s\le T}|X^{u,t,\xi}(s)-X^{u,t,\xi^{\prime}}(s)|^2|\cF_t]\le C(|\xi-\xi^{\prime}|^2+E[|\xi-\xi^{\prime}|^2]).
 \end{array}
 \ee
 Moreover, the performance functional (\ref{J-1-flow}) is well defined and finite.


 For convenience, we will use the following notation throughout the paper.
We will denote by
$X^{t,\xi}:=X^{u,t,\xi}$ the solution of the SDE (\ref{SDEu-flow}), associated with the strategy $u$, and correspondingly, $\hat X^{t,\xi}:=X^{\hat u,t,\xi}$ associated with $\hat u$.

 For $\varphi=b, \sigma, h, g$, we define
\be\label{notation-2}\left\{\begin{array}{llll}
\delta\varphi^{t,\xi}(s)=\varphi(s,\hat X^{t,\xi}(s),E[\hat X^{t,\xi}(s)],u(s))-\varphi(s,\hat X^{t,\xi}(s),E[\hat X^{t,\xi}(s)],\hat u(s)),\\
\varphi_y^{t,\xi}(s)=\frac{\partial\varphi}{\partial y}(s,\hat X^{t,\xi}(s),E[\hat X^{t,\xi}(s)],\hat u(s)),\quad \varphi^{t,\xi}_{yy}(s)=\frac{\partial^ 2\varphi}{\partial y^ 2}(s,\hat X^{t,\xi}(s),E[\hat X^{t,\xi}(s)],\hat u(s)),\\
\varphi^{t,\xi}_z(s)=\frac{\partial\varphi}{\partial z}(s,\hat X^{t,\xi}(s),E[\hat X^{t,\xi}(s)],\hat u(s)),\quad \varphi^{t,\xi}_{zz}(s)=\frac{\partial^ 2\varphi}{\partial z^ 2}(s,\hat X^{t,\xi}(s),E[\hat X^{t,\xi}(s)],\hat u(s)).
\end{array}\right.
\ee
 Let us introduce the Hamiltonian associated with the r.v. $X\in L^1(\Omega,\cF, \dbP)$:
\begin{equation}\label{hamiltonian}
H(s,X,u,p,q):=b(s,X,E[X],u)p+\sigma(s,X,E[X],u)q-h(s,X,E[X],u).
\end{equation}

\section{Adjoint equations and the stochastic maximum principle} \label{sec:smp}

 In this section we introduce the adjoint equations involved in the SMP which characterize the equilibrium points $\hat u\in \mathcal{U}[0,T]$ of our problem.

 The first order adjoint equation is the following linear backward SDE of mean-field type parametrized by $(t,\xi)\in [0,T]\times L^2(\Omega,\cF_t, \dbP; \dbR)$, satisfied by the processes $(p^{t,\xi}(s),q^{t,\xi}(s)),\, s\in[t,T],$
\be\label{pq-adjoint}
\left\{\begin{array}{lll}
dp^{t,\xi}(s)=-\{ H^{t,\xi}_y(s)+E\left[H^{t,\xi}_z(s)\right]\}ds+q^{t,\xi}(s)dW_s, \\  p^{t,x}(T)=-g^{t,\xi}_y(T)-E[g^{t,\xi}_z(T)],
\end{array}\right.
\ee
where, in view of the notation (\ref{notation-2}), for $j=y,z$,
\be\label{ddH}
H^{t,\xi}_{j}(s):=b^{t,\xi}_{j}(s)p^{t,\xi}(s)+\sigma^{t,\xi}_{j}(s)q^{t,\xi}(s)- h^{t,\xi}_{j}(s).
\ee
 This equation reduces to the standard one, when the coefficients do not explicitly depend on the expected value (or the marginal law) of the underlying diffusion process. Under Assumption \ref{cond1} on $b,\sigma, h, g$, by an adaptation of  Theorem 3.1. in Buckdahn, Li and Peng (2009), by keeping track of the parametrization $(t,\xi)$, the equation (\ref{pq-adjoint}) admits a unique $\dbF$-adapted solution $(p^{t,\xi},q^{t,\xi})$. Moreover,  there exists a constant $C>0$ such that, for all $t\in[0,T]$ and $\xi, \xi^{\prime}\in L^2(\Omega,\cF_t, \dbP; \dbR)$, we have the following estimate, $\Prob-a.s.,$
\be\label{pq-estimate}\begin{array}{lll}
E\l[\sup_{s\in[t,T]}|p^{t,\xi}(s)|^2+\int_t^T |q^{t,\xi}(s)|^2\, ds|\cF_t\r]\le  C(1+|\xi|^2+E[\xi^2]).
\end{array}
\ee

 The second order adjoint equation is the classical linear backward SDE, parametrized by $(t,\xi)\in [0,T]\times L^2(\Omega,\cF_t, \dbP; \dbR)$, that appears in Peng's stochastic maximum principle (see Peng (1990)):
\be\label{PQ-adjoint}
\left\{\begin{array}{lll}
dP^{t,\xi}(s)=-\(2b_y^{t,\xi}(s)P^{t,\xi}(s)+\left(\sigma_y^{t,\xi}(s)\right)^2P^{t,\xi}(s)+2\sigma_y^{t,\xi}(s)Q^{t,\xi}(s)+H^{t,\xi}_{yy}(s)\)\,ds+Q^{t,\xi}(s)\,dW_s, \\
P^{t,\xi}(T)=-g^{t,\xi}_{yy}(T),
 \end{array}\right.
\ee
where, in view of  (\ref{notation-2}),
\be\label{2-ddH}
 H^{t,\xi}_{yy}(s)=b^{t,\xi}_{yy}(s)p^{t,\xi}(s)+\sigma^{t,\xi}_{yy}(s)q^{t,\xi}(s)-h^{t,\xi}_{yy}(s).
\ee

\ms\no This is a standard linear backward SDE, whose unique $\dbF$-adapted solution $(P^{t,\xi},Q^{t,\xi})$ satisfies the following estimate:  There exists a constant $C>0$ such that, for all $t\in[0,T]$ and $\xi, \xi^{\prime}\in L^2(\Omega,\cF_t, \dbP; \dbR)$, we have,
\be\label{PQ-estimate}\begin{array}{lll}
E\l[\sup_{s\in[t,T]}|P^{t,\xi}(s)|^2+\int_t^T|Q^{t,\xi}(s)|^2\,ds|\cF_t\r]\le  C(1+|\xi|^2+E[\xi^2]),
\end{array}
\ee
$\dbP-\as$

The SDEs \eqref{pq-adjoint} and \eqref{PQ-adjoint} have a unique solution for a general control $u\in {\cal U}[0,T]$ and the corresponding estimates
\eqref{pq-estimate} and \eqref{PQ-estimate} hold. However, for Theorem \ref{MP1} below, only the equilibrium control $\hat u$ is substituted into the two equations.
 The following theorem is the first main  result of the paper.

\bt \label{MP1} $($Characterization of equilibrium strategies$)$ Let Assumption (\ref{cond1}) hold. Then $\hat u$ is an equilibrium strategy  for the  system (\ref{SDEu-flow})-(\ref{J-1-flow}) if and only if there are pairs of $\dbF$-adapted processes
$\left(p,q\right)$ and $\left(P,Q\right)$ which satisfy (\ref{pq-adjoint})-(\ref{pq-estimate}) and (\ref{PQ-adjoint})-(\ref{PQ-estimate}), respectively, and for which
\begin{equation}\label{SMP-2}
\begin{array}{ll}
   H(t,\xi,v,p^{t,\xi}(t),q^{t,\xi}(t))-H(t,\xi,\hat u(t),p^{t,\xi}(t),q^{t,\xi}(t))+\frac{1}{2}P^{t,\xi}(t)\left(\sigma(t,\xi, E[\xi],v)-\sigma(t,\xi, E[\xi],\hat u(t))\right)^2 \le 0, \\  \\  \qquad \qquad \,\,\,   \forall v\in U,\;
   \mbox{for all}\,\, v\in U,\; \xi\in L^2(\Omega,\cF_t, \dbP; \dbR), \; \ae t \in [0,T],\; \Prob-a.s.
\end{array}
\end{equation}
In particular, we have
\begin{equation}\label{SMP-2-x}
\begin{array}{ll}
   H(t,x,v,p^{t,x}(t),q^{t,x}(t))-H(t,x,\hat u(t),p^{t,x}(t),q^{t,x}(t))+\frac{1}{2}P^{t,x}(t)\left(\sigma(t,x, x,v)-\sigma(t,x, x,\hat u(t))\right)^2 \le 0, \\  \\  \qquad \qquad \,\,\,   \mbox{for all}\,\, v\in U,\; x\in\dbR, \; \ae t \in [0,T],\; \Prob-a.s.
\end{array}
\end{equation}
For feedback strategies, the deterministic function $\hat \varphi: \, [0,T]\times \dbR\longrightarrow U$ is an equilibrium strategy for the  system (\ref{J-1-flow})-(\ref{SDEu-flow}) if and only if there are pairs of $\dbF$-adapted processes
$\left(p,q\right)$ and $\left(P,Q\right)$ which satisfy (\ref{pq-adjoint})-(\ref{pq-estimate}) and (\ref{PQ-adjoint})-(\ref{PQ-estimate}), respectively, and for which
\begin{equation}\label{SMP-2-x-feedback}
\begin{array}{ll}
   H(t,x,v,p^{t,x}(t),q^{t,x}(t))-H(t,x,\hat \varphi(t,x),p^{t,x}(t),q^{t,x}(t))+\frac{1}{2}P^{t,x}(t)\left(\sigma(t,x, x,v)-\sigma(t,x, x,\hat\varphi(t,x))\right)^2 \le 0, \\  \\  \qquad \qquad \,\,\,   \mbox{for all}\,\, v\in U,\; x\in\dbR, \; \ae t \in [0,T],\; \Prob-a.s.
\end{array}
\end{equation}
\et

 \proof
Denote
\be
\delta H^{t,\xi}(s):= H(s,\hat X^{t,\xi}(s), u(s), p^{t,\xi}(s),q^{t,\xi}(s))-H(t,\hat X^{t,\xi}(s),\hat u(s),p^{t,\xi}(s),q^{t,\xi}(s))
\ee
where, the Hamiltonian $H$ is given by (\ref{hamiltonian}).
 \no  By  Theorem 2.1 in Buckdahn {\it et al.} (2011),  keeping track of the the parametrization $(t,\xi)$,  the key relation between the cost functional (\ref{J-1-flow}) and the associated Hamiltonian (\ref{hamiltonian}) reads
\begin{equation}\label{VI-2}
\begin{array}{lll}
J(t,\xi,\hat u)-J(t,\xi,u^{\e})=E\l[\int_t^{t+\e}\delta H^{t,\xi}(s)+\frac{1}{2}P^{t,\xi}(s)(\delta\sigma^{t,\xi}(s))^2\,ds\r] +R(\e),
\end{array}
\end{equation}
for arbitrary $u\in \mathcal{U}[0,T]$ and $(t,\xi)\in [0,T]\times L^2(\Omega,\cF_t, \dbP; \dbR)$,
where
$$
|R(\e)|\le \e\bar\rho(\e),
$$
for some function $\bar\rho: (0,\infty)\to (0,\infty)$ such that $\bar\rho(\e)\downarrow 0$ as $\e \downarrow 0$.

 Dividing both sides of (\ref{VI-2}) by $\e$ and then passing to the limit $\e\downarrow 0$, in view of Assumption \ref{cond1}, (\ref{pq-estimate}) and (\ref{PQ-estimate}),  we get
\be \label{VI-3}
\lim_{\e\downarrow 0}\frac{J(t,\xi,\hat u)-J(t,\xi, u^{\e})}{\e}=E\l[\delta H^{t,\xi}(t)+\frac{1}{2}P^{t,\xi}(t)(\delta\sigma^{t,\xi}(t))^2\r].
\ee
 Now, if (\ref{SMP-2}) holds, by setting $v:=u(t)$ for arbitrary $u\in \mathcal{U}[0,T]$,  we also get
\begin{equation*}
\begin{array}{ll}
   H(t,\xi,u(t),p^{t,\xi}(t),q^{t,\xi}(t))-H(t,\xi,\hat u(t),p^{t,\xi}(t),q^{t,\xi}(t))\\ \qquad\qquad \qquad\qquad \qquad +\frac{1}{2}P^{t,\xi}(t)\left(\sigma(t,\xi,E[\xi],u(t))-\sigma(t,\xi,E[\xi],\hat u(t))\right)^2 \le 0, \quad   \Prob-a.s.
\end{array}
\end{equation*}
Therefore, by (\ref{VI-3}) we obtain (\ref{equ-u-xi}), i.e. $\hat u$ is an equilibrium point for the  system (\ref{SDEu-flow})-(\ref{J-1-flow}).

 Conversely, assume that (\ref{equ-u-xi}) holds. Then, in view of (\ref{VI-3}), we have
\be\label{VI-4}
E\l[\delta H^{t,\xi}(t)+\frac{1}{2}P^{t,\xi}(t)(\delta\sigma^{t,\xi}(t))^2\r]\le 0,
\ee
for all $u\in \mathcal{U}[0,T]$, $\xi\in L^2(\Omega,\cF_t, \dbP; \dbR)$ and $\ae t \in [0,T]$. Now, let $A$ be an arbitrary set of ${\cal F}_t$ and set
$$
u(s):=v\ind_A+\hat u(s)\ind_{\Omega\setminus A}, \quad t\le s\le T,
$$
for an arbitrary $v\in U$. Obviously, $u$ is an admissible strategy. Moreover,  we have, for every $ s\in[t,T]$,
$$
\delta H^{t,\xi}(s)= \l(H(s,\hat X^{t,\xi}(s),v,p^{t,\xi}(s),q^{t,\xi}(s))-H(s,\hat X^{t,\xi}(s),\hat u(s),p^{t,\xi}(s),q^{t,\xi}(s)))\r)\ind_A,
$$
and
$$
\delta\sigma^{t,\xi}(s)=\l(\sigma(s,\hat X^{t,\xi}(s),E[\hat X^{t,\xi}(s)],v)-\sigma(s,\hat X^{t,\xi}(s),E[\hat X^{t,\xi}(s)],\hat u(s))\r)\ind_A.
$$
Hence, in view of (\ref{VI-4}), we have
\begin{equation*}
\begin{array}{lll}
E\big[\big(H(t,\hat X^{t,\xi}(t),v,p^{t,\xi}(t),q^{t,\xi}(t))-H(t, \hat X^{t,\xi}(t),\hat u(t),p^{t,\xi}(t),q^{t,\xi}(t))\big)\ind_A\big]\\ \\ +\frac{1}{2}E\big[ P^{t,\xi}(t)\big(\sigma(t,\hat X^{t,\xi}(t),E[\hat X^{t,\xi}(t)],v)-\sigma(t,\hat X^{t,\xi}(t),E[\hat X^{t,\xi}(t)],\hat u(t))\big)^2\ind_A \big]\le 0,
\end{array}
\end{equation*}
which in turn yields the inequality (\ref{SMP-2})  since  $v\in U$  and the set $A\in {\cal F}_t$ are arbitrary.

 Finally, both (\ref{SMP-2-x}) and (\ref{SMP-2-x-feedback}) follow from (\ref{SMP-2}), by replacing $\xi\in L^2(\Omega,\cF_t, \dbP; \dbR)$ with $x\in \dbR$. \qed

\begin{remark}\label{free-smp} Define the so-called $\cal H$-function associated with $(\hat u(t),p^{t,\xi}(t),q^{t,\xi}(t), P^{t,\xi}(t))$
\be\label{H-1}\begin{array}{lll}
{\cal H}(t,\xi,v):=H(t,\xi,v,p^{t,\xi}(t),q^{t,\xi}(t))-\frac{1}{2}P^{t,\xi}(t)\sigma^2(t,\xi,E[\xi],\hat u(t))\\ \q\qq\qq +
\frac{1}{2}P^{t,\xi}(t)\l(\sigma(t,\xi,E[\xi],v)-\sigma(t,\xi,E[\xi],\hat u(t))\r)^2. \nonumber
\end{array}
\ee
Then,  it is easily checked that the inequality (\ref{SMP-2}) is equivalent to
\be\label{H-2}
{\cal H}(t,\xi,\hat u(t))=\max_{v\in U}{\cal H}(t,\xi,v), \;\;\; \mbox{for all}\,\, \xi\in L^2(\Omega,\cF_t, \dbP; \dbR), \; \ae t \in [0,T],\; \Prob-a.s.
\ee
\end{remark}

 For all practical purposes, it would be nice to find or characterize equilibrium points, through only maximizing the Hamiltonian $H$, which amounts to only solving the first order adjoint equation (\ref{pq-adjoint}). In fact, this happens in the special case where the diffusion coefficient does not contain the control variable, i.e.,
$$
\sigma(s,y,z,v)\equiv \sigma(s,y,z),\q (s,y,z,v)\in [0,T]\times\dbR\times\dbR\times U,
$$
whence, manifestly, the inequality (\ref{SMP-2}) is equivalent to
\be\label{H-3}
H(t,\xi,\hat u(t),p^{t,\xi}(t),q^{t,\xi}(t))=\underset{v\in U}{\max}\, H(t,\xi,v,p^{t,\xi}(t),q^{t,\xi}(t)), \nonumber
\ee
for all $\xi\in L^2(\Omega,\cF_t, \dbP; \dbR), \; \ae t \in [0,T],\; \Prob-a.s.$

 Another very useful case, which we will use in some examples below, is described in the following
\begin{proposition}\label{useful}
Assume that  $U$ is a convex subset of $\dbR$, and the coefficients $b, \sigma$ and $h$ satisfy the assumption \ref{cond1}, and are such that $H(t,y,\cdot,p,q)$ is concave for all $(t,y)\in[0,T]\times \dbR$ almost surely. Then, the admissible strategy $\hat u$ is an equilibrium point for the  system (\ref{SDEu-flow})-(\ref{J-1-flow}) if and only if there is a pair of $\dbF$-adapted processes
$\left(p^{t,\xi},q^{t,\xi}\right)$  that satisfies (\ref{pq-adjoint})-(\ref{pq-estimate}) and for which
\begin{equation}\label{SMP-3}
\begin{array}{ll}
   H(t,\xi,\hat u(t),p^{t,\xi}(t),q^{t,\xi}(t))=\underset{v\in U}{\max}\, H(t,\xi,v,p^{t,\xi}(t),q^{t,\xi}(t)),
\end{array} \nonumber
\end{equation}
for all $\xi\in L^2(\Omega,\cF_t, \dbP; \dbR),\; \ae t \in [0,T],\; \Prob-a.s.$
\end{proposition}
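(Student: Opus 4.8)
The plan is to derive the statement from Theorem~\ref{MP1} rather than to redo the spike-variation analysis. The concavity of $H$ in the control and the convexity of $U$ play no role in the expansion leading to (\ref{VI-3}), so Theorem~\ref{MP1} already tells us that $\hat u$ is an equilibrium if and only if the full inequality (\ref{SMP-2}) holds. It therefore suffices to prove that, under the present hypotheses, (\ref{SMP-2}) is equivalent to the pure Hamiltonian maximum condition (\ref{SMP-3}). The only discrepancy between the two is the second-order term $\tfrac12 P^{t,\xi}(t)\big(\sigma(t,\xi,E[\xi],v)-\sigma(t,\xi,E[\xi],\hat u(t))\big)^2$, so the whole argument is about showing that this term may be dispensed with.

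For the implication ``$\hat u$ equilibrium $\Rightarrow$ (\ref{SMP-3})'' I would exploit the convexity of $U$ through an infinitesimal convex variation. Since $b,\sigma,h$ are continuous in the control (Assumption~\ref{cond1}), the left-hand side of (\ref{SMP-2}) is continuous in $v$, so the inequality holds, $\dbP$-a.s., simultaneously for all $v\in U$. Fix $v\in U$ and, for $\theta\in(0,1]$, set $v_\theta:=\hat u(t)+\theta\,(v-\hat u(t))\in U$. Inserting $v_\theta$ into (\ref{SMP-2}), using the $C^2$-regularity of the coefficients and a Taylor expansion in $\theta$, and noting that $\sigma(t,\xi,E[\xi],v_\theta)-\sigma(t,\xi,E[\xi],\hat u(t))=O(\theta)$ so that the $P^{t,\xi}(t)$-term is $O(\theta^2)$, we obtain
\begin{equation*}
\theta\,\frac{\partial H}{\partial v}\big(t,\xi,\hat u(t),p^{t,\xi}(t),q^{t,\xi}(t)\big)\,(v-\hat u(t))+o(\theta)\le 0 .
\end{equation*}
Dividing by $\theta$ and letting $\theta\downarrow 0$ yields the first-order variational inequality $\frac{\partial H}{\partial v}(t,\xi,\hat u(t),p^{t,\xi}(t),q^{t,\xi}(t))\,(v-\hat u(t))\le 0$ for all $v\in U$. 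Because $H(t,\xi,\cdot,p^{t,\xi}(t),q^{t,\xi}(t))$ is concave and $U$ is convex, this first-order condition is equivalent to $\hat u(t)$ being a global maximizer of $H$ over $U$, i.e. to (\ref{SMP-3}). The key point is that passing to the limit $\theta\downarrow 0$ annihilates the second-order term, so the process $P^{t,\xi}$ drops out and only the first-order adjoint equation (\ref{pq-adjoint}) survives.

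For the converse I would return to the identity (\ref{VI-3}): $\hat u$ is an equilibrium iff $E\big[\delta H^{t,\xi}(t)+\tfrac12 P^{t,\xi}(t)(\delta\sigma^{t,\xi}(t))^2\big]\le 0$ for every admissible $u$. Taking $v=u(t)$ and recalling $\hat X^{t,\xi}(t)=\xi$, condition (\ref{SMP-3}) gives $\delta H^{t,\xi}(t)=H(t,\xi,u(t),p^{t,\xi}(t),q^{t,\xi}(t))-H(t,\xi,\hat u(t),p^{t,\xi}(t),q^{t,\xi}(t))\le 0$ pointwise. The remaining contribution $\tfrac12 P^{t,\xi}(t)(\delta\sigma^{t,\xi}(t))^2$ is exactly where I expect the difficulty to lie: unlike in the forward direction the perturbation $u(t)$ is \emph{not} infinitesimal, so this term is not automatically negligible, and controlling its sign is the delicate part of the proof. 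It vanishes cleanly precisely when the diffusion coefficient is control-free, $\delta\sigma^{t,\xi}(t)=0$, which is the situation in the examples where this proposition is applied; in that regime (\ref{SMP-2}) and (\ref{SMP-3}) coincide and the equivalence is immediate from Theorem~\ref{MP1}. Handling this second-order term in full generality is the main obstacle for the reverse implication.
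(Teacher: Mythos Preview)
Your forward implication (equilibrium $\Rightarrow$ (\ref{SMP-3})) is correct and is essentially the paper's argument unpacked: the convex perturbation $v_\theta=\hat u(t)+\theta(v-\hat u(t))$ kills the $P^{t,\xi}$-term at order $\theta$, yields the first-order variational inequality $H_v(t,\xi,\hat u(t),p^{t,\xi}(t),q^{t,\xi}(t))(v-\hat u(t))\le 0$, and concavity of $H$ on the convex set $U$ upgrades this to global maximality. The paper states this more compactly as ``$\mathcal H$ and $H$ have the same Clarke generalized gradient at $\hat u$,'' which is precisely your observation that the additional quadratic $\tfrac12 P^{t,\xi}(t)(\sigma(v)-\sigma(\hat u))^2$ contributes nothing to the derivative at $v=\hat u$.

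For the reverse implication you have correctly located the obstruction but left it unresolved. The paper does not attempt any direct sign or size estimate on $\tfrac12 P^{t,\xi}(t)(\delta\sigma^{t,\xi}(t))^2$; instead the entire proof is a single appeal to Lemma~5.1 in Yong and Zhou (1999), which asserts that when $U$ is convex and $H$ is concave in the control, $\hat u$ maximizes $\mathcal H$ \emph{if and only if} it maximizes $H$ (not merely that the two share a gradient at $\hat u$). Granting this lemma, both directions of the proposition follow immediately from Remark~\ref{free-smp} (which recasts (\ref{SMP-2}) as $\mathcal H(t,\xi,\hat u(t))=\max_{v\in U}\mathcal H(t,\xi,v)$) together with Theorem~\ref{MP1}. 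So the missing ingredient in your argument is exactly this external result; once it is invoked, your fallback to the control-free diffusion case is unnecessary.
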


\begin{proof} In view of (\ref{H-2}) it suffices to show that $\cal H$ and $H$ have the same Clark's generalized gradient in $\hat u$. But, this follows e.g. from Lemma 5.1. in Yong and Zhou (1999), since $U$ is a convex subset of $\dbR$ and  $H(t,y,\cdot,p,q)$ is concave for all $(t,y)\in[0,T]\times \dbR$ almost surely. Hence, $\hat u$ is a maximizer of ${\cal H}(t,\xi,\cdot,p^{t,\xi}(t),q^{t,\xi}(t))$ if and only if it is a maximizer of $H(t,\xi,\cdot,p^{t,\xi}(t),q^{t,\xi}(t))$.
\end{proof}

\begin{remark} In fact both Theorem \ref{MP1} and Proposition \ref{useful} extend to the following cost functionals parametrized by $(t,\xi)\in [0,T]\times L^2(\Omega,\cF_t, \dbP; \dbR):$
\begin{equation}\label{J-2}
J(t,\xi,u)=E\left[\int_t^T h\l(t,\xi,s,X^{t,\xi}(s),E[X^{t,\xi}(s)],u(s)\r)ds
+g\l(t,\xi,X^{t,\xi}(T),E[X^{t,\xi}(T)]\r)\r], \nonumber
\end{equation}
where both $h$ and $g$ are allowed to explicitly depend on $(t,x)$. This is due to the fact that the spike variation and the subsequent Taylor expansions that are used to derive (\ref{VI-2}) are not affected by this extra dependence of $h$ and $g$ on $(t,\xi)$.
\end{remark}

\section{Some applications}
\label{sec:app}

In this section we illustrate the above results through some examples discussed in Bj\"ork and Murgoci (2008) and  Bj\"ork, Murgoci and Zhou (2011), using an extended Hamilton-Jacobi-Bellman equation. In these examples, we look for equilibrium strategies of {\it  feedback-type} i.e. deterministic function $\hat\varphi: [0,T]\times \dbR\longrightarrow U$ which satisfy (\ref{SMP-2-x-feedback}). The corresponding equilibrium point is $\hat u(s):=\hat\varphi(s,\hat X(s))$, where, $\hat X$ is corresponding to the equilibrium dynamics  given by the SDE
$$
\left\{\begin{array}{lll}
d\hat X(s)=b(s,\hat X(s),E[\hat X(s)],\varphi(s,\hat X(s)))ds+\sigma(s,\hat X(s),E[\hat X(s)],\varphi(s,\hat X(s)))dW(s),\,\, 0<s\le T,
\\
\hat X(0)=x_0.
\end{array}
\right.
$$

\subsection{ Mean-variance portfolio selection with constant risk aversion}
\label{sec:sub:mv}

The dynamics over $[0,T]$  defined on $(\Omega,\cF,\dbF,\dbP)$ is  given by the following  SDE:
\begin{equation}\label{geo-2}
 dX(s)=\l(r X(s)+\l(\alpha-r\r)u(s)\r)ds+\sigma u(s)dW(s),\qquad X(0)=x_0 \, ( \in \dbR),
\end{equation}
where $r, \alpha$ and $\sigma$ are real constants.

  The cost functional  is given by
\begin{equation}\label{geo-J}\begin{array}{lll}
 J(t,x,u)=\frac{\gamma}{2}Var(X^{t,x}(T))-E[X^{t,x}(T)] \\
\qq\qq = E\l(\frac{\gamma}{2}\l(X^{t,x}(T)\r)^2-X^{t,x}(T) \r)-\frac{\gamma}{2}\l(E[X^{t,x}(T)]\r)^2,
\end{array}
\end{equation}
where the constant $\gamma$, assumed positive, is the risk aversion coefficient. The associate dynamics, parametrized by $(t,x)\in[0,T]\times\dbR$ is

\begin{equation}\label{geo-2-u-t}
 dX^{t,x}(s)=\l(r X^{t,x}(s)+\l(\alpha-r\r)u(s)\r)ds+\sigma u(s)dW(s),\,\, t<s\le T, \qquad X^{t,x}(t)=x.
\end{equation}

 The Hamiltonian associated to this system is
\begin{equation*}
 H(t,x,u,p,q)=\l(r x+\l(\alpha-r\r)u\r)p+\sigma uq,
\end{equation*}
and the $\cal H$-function is
\be\label{H-1}\begin{array}{lll}
{\cal H}(t,x,v):=H(t,x,v,p,q)-\frac{1}{2}P(\sigma \hat \varphi(t,x))^2+
\frac{1}{2}P\sigma^2\l(v-\hat \varphi(t,x)\r)^2. \nonumber
\end{array}
\ee

 The equation for $P$ takes the form
\begin{equation}
dP^{t,x}(s)= -2r P^{t,x}(s)ds +Q^{t,x}(s)dW_s,
\end{equation}
where $P^{t,x}(T)= -\gamma$. We obtain $P^{t,x}(s)= -\gamma e^{2r(T-s)}$ for $s\in [t, T]$.

 In view of Remark \ref{free-smp}, $\hat\varphi$ is a equilibrium point if and only if it maximizes the $\cal H$-function. Such a maximum  exists if and only if
\begin{equation}\label{E-2-2}
(\alpha-r)p+\sigma q=0.
\end{equation}
Therefore, to characterize the equilibrium points, we only need to consider the first-order adjoint equation:
\begin{equation}
\left\{
\begin{array}{lll}
 dp^{t,x}(s)=-r p^{t,x}(s)ds+q^{t,x}(s)dW(s), \label{appl-2} \\ \\
p^{t,x}(T)=1-\gamma\l(\hat X^{t,x}(T)-E[\hat X^{t,x}(T)]\r).
\end{array}
\right.
\end{equation}
We try a solution of the form
\begin{equation}\label{Ansats}
p^{t,x}(s)=C_s-A_s\l(\hat X^{t,x}(s)-E[\hat X^{t,x}(s)]\r),
\end{equation}
where $A_s$ and $C_s$ are deterministic functions such that
$$
A_T=\gamma,\,\,\, C_T=1.
$$
Identifying the coefficients in (\ref{geo-2-u-t}) and (\ref{appl-2}), we get, for $s\ge t$,
\begin{equation}\label{appl3}
\begin{array}{lll}
(2r A_s+\dot A_s)\l(\hat X^{t,x}(s)-E[\hat X^{t,x}(s)]\r)+(\alpha-r)A_s(\hat\varphi(s,\hat X^{t,x}(s))-E[\hat\varphi(s,\hat X^{t,x}(s))])=\dot C_s+r C_s,
\end{array}
\end{equation}
\begin{equation}
q^{t,x}(s)=-A_s\sigma \hat\varphi(s,\hat X^{t,x}(s)).\label{appl4}
\end{equation}
In view of (\ref{E-2-2}), we have
\begin{equation}\label{E-2-2-1}
(\alpha-r)p^{t,x}(t)+\sigma q^{t,x}(t)=0.
\end{equation}
Now, from (\ref{Ansats}), we have
\begin{equation*}
p^{t,x}(t)=C_t,
\end{equation*}
which is deterministic and independent of $x$.
\no Hence, from (\ref{E-2-2}) we get
\begin{equation}\label{E-2-3}
 q^{t,x}(t)=-\frac{\alpha-r}{\sigma}C_t. \nonumber
\end{equation}
In view of (\ref{appl4}), the equilibrium point is the deterministic function
\begin{equation}\label{u-2}
 \hat\varphi(s):=\frac{\alpha-r}{\sigma^2}\frac{C_s}{A_s}, \quad 0\le s\le T.
\end{equation}
It remains to determine  $A_s$ and $C_s$.

 Indeed, inserting (\ref{u-2}) in (\ref{appl3}) we obtain
\begin{equation}\label{appl5}
\begin{array}{lll}
(\dot A_s+2r A_s)(\hat X(s)-E[\hat X^{t,x}(s)])=\dot C_s+r C_s,
\end{array} \nonumber
\end{equation}
giving the equations satisfied by $A_s$ and $C_s$
\begin{equation*}
 \left\{ \begin{array}{lll}
\dot A_s+2r A_s=0,&~ A_T=\gamma,\\
\dot C_s+r C_s=0,&~ C_T=1.
\end{array}
\right.
\end{equation*}
The solutions of these equations are
\begin{equation}
A_s=\gamma e^{2r (T-s)},\quad C_s=e^{r (T-s)},\quad 0\le s\le T. \nonumber
\end{equation}
Whence, we obtain the following explicit form of the equilibrium point:
\begin{equation}\label{u-3-2}
 \hat\varphi(s)=\frac{1}{\gamma}\frac{\alpha-r}{\sigma^2}e^{-r (T-s)},\qq 0\le s\le T, \nonumber
\end{equation}
which is identical to the one obtained in Bj\"ork and Murgoci (2008) by solving an extended HJB equation.

\subsection{Mean-variance portfolio selection with state dependent risk aversion}
Consider the same state process over $[0,T]$ as in Section \ref{sec:sub:mv}. Namely,
\begin{equation}\label{s-geo-1}
 dX(s)=\l(r X(s)+\l(\alpha-r\r)u(s)\r)ds+\sigma u(s)dW(s),\qquad X(0)=x_0,
\end{equation}
where $r, \alpha$ and $\sigma$ are real constants.  The modified cost functional takes the form
\begin{equation*}\begin{array}{lll}
 J(t,x,u)=\frac{\gamma(x)}{2}Var(X^{t,x}(T))-E[X^{t,x}(T)],  \\
\end{array}
\end{equation*}
where the risk aversion coefficient $\gamma(x)$ is made dependent on the current wealth $x$. We refere to Bj\"ork  {\it et al.} (2011) for an economic motivation of this dependence.

 The associated dynamics, parametrized by $(t,x)\in[0,T]\times\dbR$ is
\begin{equation}\label{s-geo-2}
 dX^{t,x}(s)=\l(r X^{t,x}(s)+\l(\alpha-r\r)u(s)\r)ds+\sigma u(s)dW(s),\,\, t<s\le T, \qquad X^{t,x}(t)=x.
\end{equation}

  Now, since $\gamma(x)$ is assumed strictly positive for all $x$, the equilibrium points of $J$ are the same as the ones of the the cost functional
\be \label{s-J-1}
 \bar J(t,x,u)=\frac{1}{2}Var(X^{t,x}(T))- \gamma^{-1}(x)E[X^{t,x}(T)].
\ee
Therefore, we will find feedback equilibrium points  associated with (\ref{s-J-1}).

The Hamiltonian associated to this system  is
\begin{equation*}
 H(t,x,u,p,q)=\l(r x+\l(\alpha-r\r)u\r)p+\sigma uq.
\end{equation*}
and the $\cal H$-function is
\be\label{H-1}\begin{array}{lll}
{\cal H}(t,x,v):=H(t,x,v,p,q)-\frac{1}{2}P(\sigma\hat \varphi(t,x))^2+
\frac{1}{2}P\sigma^2\l( v-\hat \varphi(t,x)\r)^2. \nonumber
\end{array}
\ee
Again, in view of Remark \ref{free-smp}, $\hat\varphi$ is a equilibrium point if and only if it maximizes the $\cal H$-function. Such a maximum  exists if and only if
\begin{equation}\label{E-2-3}
(\alpha-r)p+\sigma q=0.
\end{equation}
Therefore, to characterize the equilibrium points, we only need to consider the first-order adjoint equation:
\begin{equation}
\left\{
\begin{array}{lll}
 dp^{t,x}(s)=-r p^{t,x}(s)ds+q^{t,x}(s)dW(s), \label{s-pq} \\ \\
p^{t,x}(T)=\gamma^{-1}(x)-\l(\hat X^{t,x}(T)-E[\hat X^{t,x}(T)]\r),
\end{array}
\right.
\end{equation}
We try a solution of the form
\begin{equation}\label{s-Ansats}
p^{t,x}(s)=C_s\gamma^{-1}(x)-A_s\l(\hat X^{t,x}(s)-E[\hat X^{t,x}(s)]\r),
\end{equation}
where $A_s, B_s$ and $C_s$ are deterministic functions such that
$$
A_T=C_T=1.
$$
Identifying the coefficients in (\ref{s-geo-2}) and (\ref{s-pq}), we get for $s\ge t$,
\begin{equation}\label{s-appl3}
\begin{array}{lll}
(\dot A_s+2r A_s)\l(\hat X^{t,x}(s)-E[\hat X^{t,x}(s)]\r)+(\alpha-r)A_s\l( \hat\varphi(s,\hat X^{t,x}(s))-E[ \hat\varphi(s,\hat X^{t,x}(s))]\r)\\ \qquad\qquad\qquad\qquad\qquad\qquad\qquad\qquad\qquad =(\dot C_s+r C_s)\gamma^{-1}(x),
\end{array}
\end{equation}
\begin{equation}
q^{t,x}(s)=-A_s\sigma \varphi(s,\hat X^{t,x}(s)),\label{s-appl4}
\end{equation}
and, by (\ref{E-2-3}), we have
\begin{equation}\label{s-E-2-2}
(\alpha-r)p^{t,x}(t)+\sigma q^{t,x}(t)=0,
\end{equation}
But, from (\ref{s-Ansats}) we have
\begin{equation}
p^{t,x}(t)=C_t\gamma^{-1}(x). \nonumber
\end{equation}
Therefore, we get from (\ref{s-E-2-2})
\begin{equation}\label{s-E-2-3}
 q^{t,x}(t)=-\frac{\alpha-r}{\sigma}C_t\gamma^{-1}(x),
\end{equation}
which together with  (\ref{s-appl4}), suggest that an equilibrium point $ \hat\varphi$ of the form
\begin{equation}\label{s-u-2}
 \hat\varphi(s,y)=\frac{\alpha-r}{\sigma^2}\frac{C_s}{A_s}\gamma^{-1}(y),\qquad 0\le s\le T.
\end{equation}
It remains to determine  $A_s$ and $C_s$.

 Indeed, inserting (\ref{s-u-2}) in (\ref{s-appl3}) we obtain,
\be\label{s-appl5}
\begin{array}{lll}
(\dot A_s+2r A_s)\l(\hat X^{t,x}(s)-E[\hat X^{t,x}(s)]\r)+\frac{(\a-r)^2}{\sigma^2}C_s\l(\gamma^{-1}(\hat X^{t,x}(s))-E[\gamma^{-1}(\hat X^{t,x}(s))]\r)\\ \qquad\qquad\qquad\qquad\qquad\qquad\qquad\qquad\qquad =(\dot C_s+r C_s)\gamma^{-1}(x).
\end{array}
\ee

 Manifestly, from (\ref{s-appl5}) it is hard to draw any conclusion about the form of the deterministic functions $A_s$ and $C_s$ unless we have an explicit form of the function $\gamma(x)$. In fact, a closer look at (\ref{s-appl5}) suggests that a feasible identification of the coefficients is possible, for instance, when  $\gamma(x)=\gamma/x$. Let us examine this case.

\subsubsection*{The case $\gamma(x)=\frac{\gamma}{x}$}

 Let us consider the particular case when
$$
\gamma(x)=\frac{\gamma}{x}.
$$
In this special case,  (\ref{s-appl5}) becomes
\be\label{s-appl6}
\begin{array}{lll}
(\dot A_s+2rA_s+\frac{(\alpha-r)^2}{\gamma\sigma^2}C_s)\l(X^{t,x}(s)-E[X^{t,x}(s)]\r)-(\dot C_s+r C_s)\frac{x}{\gamma}=0.
\end{array}  \nonumber
\ee
This suggests that the functions $A_s, B_s$ and $C_s$ solve the following system of equations:
\be\left\{\begin{array}{lll}
\dot A_s+2rA_s+\frac{(\alpha-r)^2}{\gamma\sigma^2}C_s=0,
 \\
\dot C_s+r C_s=0, \label{s-appl7}\\
A_T=C_T=1,
\end{array}
\right.
\ee
which admits the following explicit solution:
\be
A_s=e^{2r (T-s)}+\frac{(\alpha-r)^2}{r\gamma\sigma^2}\l(e^{2r(T-s)}-e^{r(T-s)}\r),\quad C_s=e^{r(T-s)},\quad 0\le s\le T. \nonumber
\ee
Hence, the equilibrium point  $\hat\varphi$ explicitly given by
\begin{equation}\label{s-u-3}
\begin{array}{lll}
 \hat\varphi(s,y)=\frac{\alpha-r}{\gamma\sigma^2}\frac{C_s}{A_s}y\\ \qquad\quad =\frac{\alpha-r}
{\gamma\sigma^2}\l(e^{r (T-s)}+\frac{(\alpha-r)^2}{r\gamma\sigma^2}
\l(e^{r(T-s)}-1\r)\r)^{-1}y,\qquad
  (s,y)\in [0,T]\times\dbR.
 \end{array} \nonumber
\end{equation}

\subsection{Time-inconsistent linear-quadratic regulator}\label{sec:lqr}

We consider the following variant of a time-inconsistent  linear-quadratic regulator  discussed in Bj\"ork and Murgoci (2008). We refer to recent work by Bensoussan {\it et al.} (2013), Yong (2013a), and Hu {\it et al.} (2012), where more general models are considered. The state process over $[0,T]$ defined on $(\Omega,\cF,\dbF,\dbP)$  is a scalar with dynamics
\begin{equation}\label{LGMF}
 dX(s)=\l(aX(s)+bu(s)\r)ds+\sigma dW(s),\quad X(0)=x_0,
\end{equation}
where $ a, b$ and $\sigma$ are real constants.
 The cost functional is given by
\begin{equation}\label{J-LQ}
 J(t,x,u)=\frac{1}{2}E\l[\int_t^T u^2(s)\,ds\r]+\frac{\gamma}{2}E\left[ \left(X^{t,x}(T)-x\right)^2\right],  \nonumber
\end{equation}
where $\gamma$ is a positive constant. The associated dynamics, parametrized by $(t,x)\in[0,T]\times\dbR$ is
\begin{equation}\label{LGMF-t}
 dX^{t,x}(s)=\l(aX^{t,x}(s)+bu(s)\r)ds+\sigma dW(s),\,\, t<s\le T, \qquad X^{t,x}(t)=x.
\end{equation}

  As mentioned in Bj\"ork and Murgoci (2008), in this time-inconsistent version of the linear-quadratic regulator, we want to control the system so that the final state $X^{t,x}(T)$ stays as close as possible to $X^{t,x}(t)=x$, while at the same time we keep the control energy (expressed by the integral term) small. The time-inconsistency stems from the fact that the target point $X^{t,x}(t)=x$ is changing with time.

  The Hamiltonian associated to this system is
\begin{equation}\label{H(t)}\begin{array}{ll}
H(s,x,u,p,q):=\l(ax+bu\r)p+\sigma q-\frac{1}{2}u^2.
\end{array}
\end{equation}
and the $\cal H$-function is
\be\label{H-1}\begin{array}{lll}
{\cal H}(t,x,v):=H(t,x,v,p,q)-\frac{1}{2}P\sigma^2 \nonumber
\end{array}
\ee
Again, in view of Remark \ref{free-smp}, $\hat\varphi$ is a equilibrium point if and only if it maximizes the $\cal H$-function. Such a maximizer  is
\begin{equation}\label{u-1}
\hat \varphi=bp.
\end{equation}
Therefore, to characterize the equilibrium points, we only need to consider the first-order adjoint equation:
\begin{equation}
\left\{
\begin{array}{lll}
 dp^{t,x}(s)=-ap^{t,x}(s)ds+q^{t,x}(s)dW(s), \label{p-LGMF} \\ \\
p^{t,x}(T)=\gamma(x-X^{t,x}(T)).
\end{array}
\right.
\end{equation}
We try a solution of the form
\be\label{Ansats-2}
p^{t,x}(s)=\b_sx-\a_s\hat X^{t,x}(s),
\ee
where $\a_s$ and $\b_s$ are deterministic functions such that
\be
\a_T=\b_T=\gamma. \nonumber
\ee
Identifying the coefficients in (\ref{LGMF-t}) and (\ref{p-LGMF}), we get, for $s\ge t$,
\be\label{a-beta}
(\dot\a_s+2a\a_s)\hat X^{t,x}(s)+b\a_s\hat \varphi(s,\hat X^{t,x}(s))=(\dot\b_s+a\b_s)x,
\ee
and
\be
q^{t,x}(s)=-\sigma\a_s. \nonumber
\ee
On the other hand, in view of (\ref{u-1})
\begin{equation}\label{u-hat-1}
\hat \varphi(t,x)=bp^{t,x}(t). \nonumber
\end{equation}
Thus, by (\ref{Ansats-2}), the function $\varphi$ which yields the equilibrium point has the form
\begin{equation}\label{u-hat-2}
\hat\varphi(s,y)=b(\b_s-\a_s)y, \quad (s,y)\in[0,T]\times \dbR.
\end{equation}
Therefore, (\ref{a-beta}) reduces to
\be
(\dot\a_s+(2a+b^2\b_s)\a_s-b^2\a^2_s)\hat X^{t,x}(s)=(\dot\b_s+a\b_s)x, \nonumber
\ee
suggesting that $(\a_s,\b_s)$ solves the system of equations
\begin{equation}\label{beta-1}
 \left\{ \begin{array}{lll}
\dot\b_s+a\b_s=0,\\
\dot\a_s+(2a+b^2\b_s)\a_s-b^2\a^2_s=0,\\
\a_T=\gamma, \,\,\b_T=\gamma.
\end{array}
\right.
\end{equation}

  The first equation in \eqref{beta-1} yields  the solution
$$
\b_s=\gamma e^{a(T-s)}.
$$
The second equation is of Riccati type whose solution is $\a_s:=\frac{v_s}{w_s}$, where $(v,w)$ solves the following system of linear differential equation:
$$
\left(\begin{array}{lll}\dot  v_s \\ \dot  w_s\end{array}\right)=\left(\begin{array}{lll} -2a & 0 \\ -b^2& b^2\b_s\end{array}\right)\left(\begin{array}{lll} v_s \\ w_s\end{array}\right),\quad \left(\begin{array}{lll} v_T\\ w_T\end{array}\right)=\left(\begin{array}{lll} \gamma \\ 1 \end{array}\right).
$$
which is obviously solvable.


\section{Extension to mean-field game models}
\label{sec:game}

In this section we extend the SMP approach to
 an $N$-player stochastic differential game of mean-field type where the $i$-th player  would like to find a strategy to optimize her own cost functional regardless of the other players' cost functionals.

Let  $X=(X_1, \ldots, X_N)$ describe the states of the $N$ players and  $u=(u_1, \ldots, u_N)\in\Pi_{i=1}^N\cU_i[0,T]$ be the ensemble of all the individual admissible strategies. Each $u_i$ takes values   in a non-empty subset $U_i$ of $\dbR$  and the class of admissible strategies is given by
\begin{equation}
\cU_i[0,T]=\Big\{u_i: [0,T]\times \Omega \longrightarrow U_i;\,\,   u_i\, \mbox{is}\ \dbF\mbox{-adapted and square integrable}\Big\}.
\end{equation}
To simplify the analysis, we consider a population of uniform agents so that $U_i=U$
and they have the same initial state $X_i(0)=x_0$ at time 0  for all $i\in \{1, \ldots, N\}$. In this case, the $N$ sets $\cU_i[0,T]$ are identical and equal to $\cU[0,T]$ .
Let the dynamics be given by the following SDE:
\begin{equation}\label{N-SDEu}
dX_i(s)= b(s,X_i(s),E[X_i(s)],u_i(s))ds+\sigma(s,X_i(s),E[X_i(s)])dW_i(s),
\end{equation}
where the strategy $u_i$ does not enter the diffusion coefficient  $\sigma$.

 For notational simplicity, we do not explicitly indicate the dependence of the state on the control by writing $X_i^{u_i}(s)$. We take $\mathbb{F}$ to be the natural  filtration  of the $N$-dimensional standard Brownian motion $(W_1, \ldots, W_N)$ augmented by $\mathbb{P}$-null sets of ${\cal F}$.

 Denote
$$
(u_{-i},v):=(u_1,\ldots,u_{i-1},v,u_{i+1},\ldots,u_N),\quad i=1,\ldots,N.
$$
Then, the $i$-th  player selects $u_i\in\cU[0,T]$ to evaluate her cost functional
$$
v\mapsto J^{i,N}(t,x_i;u_{-i},v):=J^{i,N}(t,x_i;u_1,\ldots,u_{i-1},v,u_{i+1},\ldots,u_N),
$$
 where,  for $i\in\{1,\ldots,N\}$,
\be\label{N-J-1}
\begin{array}{lll}
J^{i,N}(t,x_i;u)=E\left[\int_t^T h\l(s,X^{t,x_i}_i(s),E[X^{t,x_i}_i(s)], X^{(-i)}(s),u_i(s)\r)ds \right.
\\ \qquad\qquad\qquad\qquad\qquad\qquad \qquad\qquad + \left.g\l(X^{t,x_i}_i(T),  E[X^{t,x_i}_i(T)], X^{(-i)}(T) \r)\r],
\end{array}
\ee
whose associated dynamics, parametrized by $(t,x_i)$, is
\begin{equation}\label{N-SDEu-flow}\left\{\begin{array}{lll}
dX^{t,x_i}_i(s)= b(s,X^{t,x_i}_i(s),E[X^{t,x_i}_i(s)],u_i(s))ds+\sigma(s,X^{t,x_i}_i(s),E[X^{t,x_i}_i(s)])dW_i(s),\,\, t<s\le T,
\\
X^{t,x_i}(t)=x_i.
\end{array}
\right.
\end{equation}

The $i$-th player interacts with others  through the mean-field  coupling term
$$
X^{(-i)}=\frac{1}{N-1}\sum_{k\ne i}^N X_k,\qquad i\in\{1,\ldots,N\},
$$ which models the aggregate impact of all other players.

 Note that the $i$-th player assesses her cost functional over $[t,T]$ seen from  her local state $X_i(t)= x_i$
and she knows only the initial states of all other players at time 0, $(X_k(0)=x_0$, $ k\neq i$).   Thus the game may be cast as a decision problem where each player has incomplete state information about other players. The development of a solution framework in terms of a certain exact equilibrium notion is challenging. Our objective is to address this incomplete state information issue and design a set of individual strategies which has a meaningful interpretation. This will be achieved by using the so-called {\it  consistent mean-field approximation}.

 For a large  $N$, even if each player has full state information of the system,  the exact characterization of the equilibrium points, based on the SMP, will have high complexity since one needs to solve a very high dimensional system of coupled variational inequalities for the underlying Hamiltonians similar to (\ref{SMP-2}).
Therefore, we should rely on the mean-field approximation of our system.

 We note that $J^{i,N}$  depends on not only $u_i$, but also all other players' strategies $u_{-i}$ through the mean-field coupling term $X^{(-i)}$.  This suggests that we extend Definition \ref{SGPE} to the $N$-player case as follows.

\begin{definition}\label{N-SGPE} The admissible strategy $\hat u=(\hat u_1, \ldots, \hat u_N)$ is an asymptotic (in population size $N$) sub-game perfect Nash equilibrium point for the system (\ref{N-SDEu})-(\ref{N-J-1}) if for every $i\in\{1,\ldots,N\}$,
\begin{equation}\label{equ-u-N}
\lim_{\e\downarrow 0}\frac{J^{i,N}(t,x_i;\hat u)-J^{i,N}(t,x_i;\hat u_{-i}, u_i^{\e})}{\e}\le O(\delta_N),
\end{equation}
for each given $u_i\in \cU_i[0,T],\,x_i\in\dbR$ and $\, \ae t\in[0,T]$, where $u_i^{\e}$ is the spike variation (\ref{spike}) of the  strategy $\hat u_i$ of the $i$-th player using $u_i$ and $0\le \delta_N\rightarrow 0 $ as $N\rightarrow \infty$.
\end{definition}

 The error term $O(\delta_N)$ is due to the mean field approximation to be introduced below for designing $\hat u$.

\subsection{The local limiting decision problem}

Let $X^{(-i)}$ be approximated by a deterministic  function
$\bar X(s)$ on $[0,T]$. Denote
the cost functional
\be\label{J-i}\begin{array}{lll}
\bar J^{i} (t, x_i; u_i) =  E\left[\int_t^T h\left(s, X^{t,x_i}_i(s),  E[ X^{t,x_i}_i(s) ], \bar X(s), u_i(s)\right)ds   \right.
\\ \qquad\qquad\qquad\qquad\qquad\qquad \qquad\qquad \qquad\qquad \left. +g\left(X^{t,x_i}_i(T), E[ X^{t,x_i}_i(T)], \bar X(T)\right)\right]
\end{array}
\ee
which is intended as an approximation of $J^{i,N}$.
Note that once $\bar X$ is assumed fixed, $\bar J^{i}$ is affected only by $u_i$.
 The introduction of $\bar X$ as a {\it fixed} function of time is based on the freezing idea in mean field games. The reason is that  $X^{(-i)}=\frac{1}{N-1}
\sum_{k=1}^N X_k$ is generated by many  negligibly small players, and therefore a given player has little influence on it.

The strategy selection of the $i$-th player  is based on finding a sub-game perfect Nash equilibrium  for $\bar J^{i}$ to which the method based on the Stochastic Maximum Principle (cf.\@ (\ref{SMP-2-x})) of Section \ref{sec:smp} can be applied under the following conditions:

 \bcd\label{cond3}
\no
\begin{itemize}
\item [$(i)$] The functions $b, \sigma, h,g$ are  continuous in $(y,z,u)$, and bounded.
\item [$(ii)$] The functions $b, \sigma$ are twice continuously differentiable with respect to $(y,z)$, and their derivatives up to the second order are continuous in $(y,z, u)$, and bounded.
\item [$(iii)$] The functions $ h, g$ are twice continuously differentiable with respect to $(y,z,w)$, and their derivatives up to the second order are continuous in $(x,y, w, u)$, and bounded.
\end{itemize}
\ecd

Let $\hat u_i\in \cU [0,T]$ be a  sub-game perfect Nash equilibrium point for (\ref{N-J-1})-(\ref{N-SDEu-flow}) and denote the  associated  backward SDE
\begin{equation} \label{ppH}
\left\{
\begin{array}{ll}
dp^{t,x_i}(s)=-\{ H^{t,x_i}_y(s)+E [H^{t,x_i}_z(s)]\} ds +q^{t,x_i}(s)dW_i(s),\\
p^{t,x_i}(T)= -g^{t,x_i}_y (T)-E[g^{t,x_i}_z(T)],
\end{array}
\right.
\end{equation}
where for $\zeta=y,z$,
\begin{eqnarray*}
H^{t,x_i}_{\zeta}(s)&=&b_{\zeta}(s,\hat X^{t,x_i}_i(s),E[\hat X^{t,x_i}_i(s)], \hat u_i)p^{t,x_i}(s)+
 \sigma_{\zeta}(s,\hat X^{t,x_i}_i(s),E\hat [X^{t,x_i}_i(s)])q^{t,x_i}(s) \\
&& - h_{\zeta}(s,\hat  X^{t,x_i}_i, E[\hat  X^{t,x_i}_i],\bar X(s),\hat  u_i),
\end{eqnarray*}
for which
\begin{equation}\label{ppH2}
\begin{array}{ll}
   H(t,x_i,v,p^{t,x_i}(t),q^{t,x_i}(t))-H(t,x_i,\hat u_i(t),p^{t,x_i}(t),q^{t,x_i}(t)) \le 0, \\ \qquad\qquad\qquad\qquad  \,\,\,   \forall v\in U,\; x_i\in \dbR, \;\, \ae t\in[0,T],\;\Prob-a.s.
\end{array}
\end{equation}
The closed-loop equilibrium state  associated to $\hat u_i$ of the $i$-th player is given by
\begin{equation}
d\hat X_i(s)= b(s,\hat X_i(s), E[\hat X_i(s)], \hat u_i(s))ds +\sigma(s, \hat X_i(s), E[\hat X_i(s)]) dW_i(s).
\end{equation}
 Since $\hat u_i$ does not depend on the Brownian motions of the other $N-1$ players, it is a decentralized strategy, i.e.,\@ the processes $\{\hat u_k, 1\leq k\leq N\}$ are independent.
Further, we  impose
\bcd\label{cond4}
 All the processes $\{\hat u_k, 1\leq k\leq N\}$ have the same law.
\ecd
This  restriction ensures that  $\{\hat X_i, 1\leq i\leq N\}$ are i.i.d. random processes. By the Law of Large Numbers, the consistency condition on $\bar X$  reads
\begin{equation}\label{consis}
\bar X(s)= E[\hat X_1(s)], \quad \forall s\in [0,T].
\end{equation}

A question of central interest is  how to characterize the performance of the set of strategies $\hat u=(\hat u_1, \ldots, \hat u_N)$ when they are implemented and assessed according to the original cost functionals $\{J^{i,N}, 1\leq i\leq N\}$.  An answer is provided in the following theorem whose proof is displayed in the next section. This is the second main result of the paper.

\begin{theorem}\label{theorem:asy} Under Assumptions (\ref{cond3}) and (\ref{cond4}), we have
\be\label{MF-app}
 J ^{i,N}(t,x_i;\hat u )-J^{i,N}(t,x_i;\hat u_{-i}, u_i^\varepsilon)
=\bar J^{i}(t,x_i; \hat u_i)-\bar J^{i}(t,x_i; u_i^\e )+O\l(\frac{\e}{\sqrt{N-1}}\r).
\ee
Moreover,
  $\hat u=(\hat u_1, \ldots, \hat u_N)\in \prod_{i=1}^N\cU[0,T]$  is an asymptotic sub-game perfect Nash equilibrium for the system (\ref{N-SDEu})-(\ref{N-J-1}) where $\delta_N=O(1/\sqrt{N})$.~\qed
\end{theorem}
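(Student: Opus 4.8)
The plan is to compare the two cost differences term by term, exploiting that player $i$'s spike variation $u_i^\e$ leaves the coupling term $X^{(-i)}$ untouched. Indeed, since the strategies $\hat u_k$ ($k\ne i$) and the driving noises $W_k$ are unchanged, and the states $X_k$ ($k\ne i$) depend neither on $W_i$ nor on $u_i$, the process $X^{(-i)}=\frac{1}{N-1}\sum_{k\ne i}\hat X_k$ is the same whether player $i$ plays $\hat u_i$ or $u_i^\e$. I would first record two a priori estimates. Writing $\hat X_i$ and $X_i^\e$ for the state of player $i$ under $\hat u_i$ and $u_i^\e$ respectively (both with bounded second moments by Assumption \ref{cond3}$(i)$), the perturbation $\Delta_i:=X_i^\e-\hat X_i$ satisfies $E[\sup_{t\le s\le T}|\Delta_i(s)|^2]=O(\e^2)$; the order $\e$ (rather than $\sqrt\e$) is available precisely because the control does not enter the diffusion coefficient in \eqref{N-SDEu}, so the only genuine source in the linear SDE for $\Delta_i$ is the $O(1)$ drift mismatch on the interval $[t,t+\e]$ of length $\e$, and Gronwall closes the estimate. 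Second, under Assumptions \ref{cond3}--\ref{cond4} the equilibrium states $\{\hat X_k\}_{k\ne i}$ are i.i.d.\ with common mean $\bar X$ by the consistency condition \eqref{consis}, whence $\|X^{(-i)}(s)-\bar X(s)\|_{L^2}^2=\mathrm{Var}(\hat X_1(s))/(N-1)=O(1/(N-1))$ uniformly in $s$.

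For the identity \eqref{MF-app}, I note that $J^{i,N}$ and $\bar J^i$ differ only through the coupling argument, $X^{(-i)}$ versus $\bar X$, while the dynamics of $\hat X_i$ and $X_i^\e$ are identical in both functionals. Abbreviating $F^{\hat u}_s(w):=h(s,\hat X_i(s),E[\hat X_i(s)],w,\hat u_i(s))$ and $F^{u^\e}_s(w):=h(s,X_i^\e(s),E[X_i^\e(s)],w,u_i^\e(s))$, and setting $G_s:=F^{\hat u}_s-F^{u^\e}_s$ (with an analogous $G_T$ built from $g$), the left-hand side of \eqref{MF-app} minus its right-hand side equals
\[
E\Big[\int_t^T\big(G_s(X^{(-i)}(s))-G_s(\bar X(s))\big)\,ds+\big(G_T(X^{(-i)}(T))-G_T(\bar X(T))\big)\Big].
\]
The crucial move is to expand the \emph{difference} $G_s$ as a single object rather than the two running-cost differences separately: by the mean value theorem $G_s(X^{(-i)})-G_s(\bar X)=G_s'(\theta_s)(X^{(-i)}-\bar X)$, where $G_s'$ is the difference of $h_w$ evaluated at the two states and controls.

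It then remains to estimate $G_s'$ on two regimes. On $[t+\e,T]$ the controls agree ($u_i^\e=\hat u_i$), so $|G_s'|\le C(|\Delta_i(s)|+E|\Delta_i(s)|)$ by boundedness of the mixed second derivatives $h_{wy},h_{wz}$ from Assumption \ref{cond3}$(iii)$; combined with the law-of-large-numbers estimate and Cauchy--Schwarz this contributes $O(\e)\cdot O(1/\sqrt{N-1})$ pointwise in time, hence $O(\e/\sqrt{N-1})$ after integration. On $[t,t+\e]$ I would use only boundedness of $h_w$, so $|G_s'|=O(1)$, but the interval has length $\e$, again yielding $O(\e)\cdot O(1/\sqrt{N-1})$. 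The terminal term is handled as in the first regime since $T>t+\e$. Summing gives \eqref{MF-app}. I expect this two-regime expansion of the difference $G_s$ to be the main obstacle: expanding $F^{\hat u}_s$ and $F^{u^\e}_s$ separately would only produce a factor $O(1/\sqrt{N-1})$ with no power of $\e$, which is fatal once one divides by $\e$. The $\e$-smallness must be extracted from the closeness of the two states and controls, and this is exactly where the $O(\e)$ (not $O(\sqrt\e)$) perturbation estimate, and hence the absence of the control from $\sigma$, is indispensable.

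Finally, for the asymptotic equilibrium property I would divide \eqref{MF-app} by $\e$ and let $\e\downarrow0$. By construction $\hat u_i$ is characterized as a sub-game perfect Nash equilibrium for the frozen problem $\bar J^i$ through the stochastic maximum principle (Theorem \ref{MP1} applied with $\bar X$ fixed), so $\lim_{\e\downarrow0}\frac{1}{\e}\big(\bar J^i(t,x_i;\hat u_i)-\bar J^i(t,x_i;u_i^\e)\big)\le 0$. Passing to the limit in \eqref{MF-app} then gives $\limsup_{\e\downarrow0}\frac{1}{\e}\big(J^{i,N}(t,x_i;\hat u)-J^{i,N}(t,x_i;\hat u_{-i},u_i^\e)\big)\le O(1/\sqrt{N-1})$ for every $i$, which is precisely \eqref{equ-u-N} with $\delta_N=O(1/\sqrt N)$, establishing that $\hat u$ is an asymptotic sub-game perfect Nash equilibrium.
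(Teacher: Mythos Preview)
Your proposal is correct and follows essentially the same route as the paper: the same two a priori estimates (the $O(\e^2)$ perturbation bound from the control-free diffusion and the $O(1/(N-1))$ law-of-large-numbers bound), the same decomposition isolating the ``difference of differences'' $G_s(X^{(-i)})-G_s(\bar X)$ (which the paper calls $\Delta_{h2}$ via the function $\alpha(w)$), the same mean-value expansion in the coupling variable $w$, and the same pointwise bound $|G_s'|\le C(|\Delta_i|+E|\Delta_i|+\ind_{[t,t+\e]})$ combined with Cauchy--Schwarz. Your explicit two-regime split $[t,t+\e]\cup[t+\e,T]$ is just a repackaging of the indicator term in the paper's single inequality, and you correctly flag that expanding the two cost differences separately would lose the crucial factor of $\e$.
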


\section{Proof of Theorem \ref{theorem:asy}}
\label{sec:proof}

This section is devoted to the proof of Theorem \ref{theorem:asy}. We first establish some performance estimates which will be used to  conclude the proof of the theorem.
\subsection{The performance estimate}
 We have
\begin{equation*}
\begin{array}{lll}
J^{i,N}(t,x_i; \hat u) = E_{t,x_i}\l[\int_t^T h(s, \hat X^{t,x_i}_i(s),  E [\hat X^{t,x_i}_i(s)] , \hat X^{(-i)}(s), \hat u_i(s))ds \right. \\ \qquad\qquad\qquad \qquad\qquad\qquad\qquad\qquad\qquad\left. +g(\hat X^{t,x_i}_i(T), E [\hat X^{t,x_i}_i(T)],  \hat X^{(-i)}(T))\r].
\end{array}
\end{equation*}

Now we fix $i\in \{1, \ldots, N\}$ and change $\hat u_i$ to $u_i^\varepsilon$ when all other players apply $\hat u_{-i}$, where
$$
u_i^{\varepsilon}(s):=\left\{
\begin{array}{ll}
u_{i}(s),  \,\, &  s\in [t, t+\varepsilon],\\
\hat u_i(s), \,\, & s\in [t,T]\backslash [t, t+\varepsilon],
\end{array}\right.
$$
and $u_i\in {\cal U}[0,T]$. We have
\be\begin{array}{lll}
  J^{i,N}(t,x_i;\hat u_{-i}, u_i^\e)= E\l[\int_t^T h\left(s,  X^{t,x_i}_i(s),  E [ X^{t,x_i}_i(s)] , \hat X^{(-i)}(s)
,  u^{\e}_i(s)\right)ds  \right. \\\qquad\qquad\qquad \qquad\qquad\qquad\qquad\qquad\qquad \left.
 +g\left( X^{t,x_i}_i(T), E [ X^{t,x_i}_i(T)],  \hat X^{(-i)}(T)  \right)\r],
\end{array}
\ee
where $X^{t,x_i}_i$ is the solution of \eqref{N-SDEu-flow} with admissible strategy $u_i^\varepsilon$. The following estimates will be frequently used in the sequel.

\begin{lemma} \label{lemma:m2bound}
For the $i$-th player, let $ X_i$ and $ \hat X_i$ be the state processes corresponding to  $u_i^\varepsilon$ and $\hat u_i$ respectively.  Then
 $$
E\Big[\sup_{t\le s\le T} |X^{t,x_i}_i(s)-\hat X^{t,x_i}_i(s)|^2\Big]\le C\varepsilon^{2},
 $$
 where $C$ does not depend on $(t, x_i)$.
\end{lemma}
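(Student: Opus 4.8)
The goal is an $L^2$-estimate on the difference of the state process for player $i$ under the spike-perturbed control $u_i^\e$ versus the equilibrium control $\hat u_i$, uniform in time and of order $\e$. The key observation is that the two SDEs in \eqref{N-SDEu-flow} share identical coefficients except on the small interval $[t,t+\e]$, where the drift differs because $u_i^\e=u_i\neq \hat u_i$ there; on $[t+\e,T]$ both controls equal $\hat u_i$. Moreover, since the control does not enter the diffusion coefficient $\sigma$ (this is the structural assumption in \eqref{N-SDEu}), the difference of the martingale parts is driven only by the difference of $\sigma$ evaluated at the two states, which is Lipschitz in $(y,z)$ by Assumption \ref{cond3}. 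So the first thing I would do is write $\Delta(s):=X_i^{t,x_i}(s)-\hat X_i^{t,x_i}(s)$, subtract the two integral forms of \eqref{N-SDEu-flow}, and split the drift difference into a part coming from the control mismatch (supported on $[t,t+\e]$) and a part coming from the state mismatch (the difference of $b$ at the two arguments, present throughout).

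\textbf{Main steps.} First I would bound the contribution of the control-mismatch term directly: because $b$ is bounded (Assumption \ref{cond3}(i)) and the perturbation lives on $[t,t+\e]$, the drift increment $\int_t^{s\wedge(t+\e)}\bigl(b(\cdots,u_i)-b(\cdots,\hat u_i)\bigr)\,dr$ is bounded in absolute value by $2\|b\|_\infty\,\e$, hence contributes $O(\e^2)$ after squaring. Second, for the remaining terms I would use the Lipschitz property of $b$ and $\sigma$ in $(y,z)$ (which follows from the boundedness of their first derivatives in Assumption \ref{cond3}(ii)), together with the fact that $E[X_i]-E[\hat X_i]$ is itself controlled by $E|\Delta|$. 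Applying the Burkholder--Davis--Gundy inequality to the stochastic integral and the Cauchy--Schwarz inequality to the drift, I would arrive at an inequality of the schematic form
\be
E\Big[\sup_{t\le r\le s}|\Delta(r)|^2\Big]\le C\e^2+C\int_t^s E\Big[\sup_{t\le r\le \tau}|\Delta(r)|^2\Big]\,d\tau. \nonumber
\ee
Third, I would invoke Gronwall's inequality to conclude $E\bigl[\sup_{t\le s\le T}|\Delta(s)|^2\bigr]\le C\e^2$, with $C$ depending only on $T$ and the uniform bounds on the coefficients and their derivatives, hence independent of $(t,x_i)$ as claimed.

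\textbf{Main obstacle.} The routine parts are the BDG and Gronwall applications; the one point requiring care is the mean-field coupling through $E[X_i^{t,x_i}(s)]$ and $E[\hat X_i^{t,x_i}(s)]$ inside both $b$ and $\sigma$. Here I would use $|E[X_i]-E[\hat X_i]|\le E|\Delta|\le \bigl(E[\sup|\Delta|^2]\bigr)^{1/2}$ so that the expectation-dependence folds into the same Gronwall functional rather than producing an uncontrolled extra term; the Lipschitz constant in the $z$-argument simply enlarges $C$. A secondary subtlety is that the estimate must hold with a constant uniform in the starting pair $(t,x_i)$, which is automatic provided every Lipschitz and boundedness constant invoked comes from the global bounds in Assumption \ref{cond3} and none from $x_i$ itself; since the control-mismatch term was bounded using only $\|b\|_\infty$ and the length $\e$, no dependence on $x_i$ enters. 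I expect the entire argument to be short once the drift is correctly decomposed, so the real content is simply recognizing that the control perturbation contributes at order $\e$ to the drift and therefore $\e^2$ after squaring, with the state-feedback part absorbed by Gronwall.
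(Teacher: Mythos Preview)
Your proposal is correct and follows essentially the same route as the paper: subtract the two integral forms of \eqref{N-SDEu-flow}, split the drift difference into a control-mismatch term supported on $[t,t+\e]$ (bounded via $\|b\|_\infty$, contributing $C\e^2$) and a state-mismatch term handled by the Lipschitz property of $b$ and $\sigma$ in $(y,z)$, apply BDG plus Cauchy--Schwarz, and close with Gronwall. Your explicit remark that the mean-field argument $E[X_i]-E[\hat X_i]$ is absorbed by $E|\Delta|$ is exactly how the paper's estimate \eqref{I-b-1} treats it, and your observation that uniformity in $(t,x_i)$ comes for free from the global bounds in Assumption \ref{cond3} matches the paper's reasoning.
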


\proof
 Using the  SDEs  (\ref{N-SDEu-flow}) for the two state processes, we have
\begin{eqnarray*}
X_i^{t,x_i}(\tau)-\hat X_i^{t,x_i}(\tau)&=& \int_t^\tau\left\{ b\left(s, X_i^{t,x_i}(s), E[ X_i^{t,x_i}(s)],  u_i^\varepsilon(s)\right) - b\left(s, \hat X_i^{t,x_i}(s), E[ \hat X_i^{t,x_i}(s)], \hat u_i(s)\right)\right\}ds \\
&&+\int_t^\tau \left\{\sigma\left(s,  X_i^{t,x_i}(s), E[ X_i^{t,x_i}(s)]\right)-\sigma\left(s, \hat X_i^{t,x_i}(s), E[\hat X_i^{t,x_i}(s)]\right)\right\} dW_i(s).
\end{eqnarray*}
By Burkholder-Davis-Gundy's inequality, we have
\begin{equation*}
\begin{array}{lll}
 E[\sup_{t\le \tau \le T}|X_i^{t,x_i}(\tau)-\hat X_i^{t,x_i}(\tau)|^2]\\ \quad
 \le C E  \left[\Big(\int_t^T\left| b\left(s, X_i^{t,x_i}(s), E[ X_i^{t,x_i}(s)],  u_i^\varepsilon(s)\right) - b\left(s, \hat X_i^{t,x_i}(s), E[ \hat X_i^{t,x_i}(s)], \hat u_i(s)\right)\right|ds\Big)^2\right]\\
\quad + CE\l[\int_t^T \left|\sigma\left(s,  X_i^{t,x_i}(s), E[ X_i^{t,x_i}(s)]\right)-\sigma
\left(s, \hat X_i^{t,x_i}(s), E[\hat X_i^{t,x_i}(s)]\right)\right|^2 ds\r] \\
\quad =:  C(I_b+I_\sigma),
\end{array}
\end{equation*}
where $C$ is a positive constant.

Noting that, in view of  Assumption (\ref{cond3}-(i)), if the positive constant $C$ denotes the bound of $b$, we have
$$
\begin{array}{lll}
|b\left(s, \hat X_i^{t,x_i}(s), E[\hat  X_i^{t,x_i}(s)],  u_i^\varepsilon(s)\right) - b\left(s, \hat X_i^{t,x_i}(s), E[ \hat X_i^{t,x_i}(s)], \hat u_i(s)\right)|\\ = |b\left(s, \hat X_i^{t,x_i}(s), E[\hat  X_i^{t,x_i}(s)],  u_i(s)\right) - b\left(s, \hat X_i^{t,x_i}(s), E[ \hat X_i^{t,x_i}(s)], \hat u_i(s)\right)|\ind_{[t,t+\e]}(s)\\
\le C\ind_{[t,t+\e]}(s),
\end{array}
$$
Thus, since $b$ is Lipschitz in $(y,z)$, by  Assumption (\ref{cond3}-(ii)), we have
\be\label{I-b-1}\begin{array}{lll}
| b\left(s, X_i^{t,x_i}(s), E[ X_i^{t,x_i}(s)],  u_i^\varepsilon(s)\right) - b\left(s, \hat X_i^{t,x_i}(s), E[ \hat X_i^{t,x_i}(s)], \hat u_i(s)\right)|\\ \le |b\left(s, X_i^{t,x_i}(s), E[ X_i^{t,x_i}(s)],  u_i^\varepsilon(s)\right) - b\left(s, \hat X_i^{t,x_i}(s), E[ \hat X_i^{t,x_i}(s)], u^\varepsilon_i(s)\right)|\\ +|b\left(s, \hat X_i^{t,x_i}(s), E[\hat  X_i^{t,x_i}(s)],  u_i^\varepsilon(s)\right) - b\left(s, \hat X_i^{t,x_i}(s), E[ \hat X_i^{t,x_i}(s)], \hat u_i(s)\right)|\\
 \le C\left(|X_i^{t,x_i}(s)-\hat X_i^{t,x_i}(s)|+E[|X_i^{t,x_i}(s)-\hat X_i^{t,x_i}(s)|]+\ind_{[t,t+\e]}(s)\right).
\end{array}
\ee
 The Cauchy-Schwarz inequality yields
\begin{eqnarray}
I_b&\le& C(T-t)\int_t^T  E[|X_i^{t,x_i}(s)-\hat X_i^{t,x_i}(s)|^2] ds
+C E\Big[ \Big(\int_t^T \ind_{[t,t+\e]}(s)ds\Big)^2 \Big] \nonumber \\
&\le & C\int_t^T  E[\sup_{t\le \eta\le s}|X_i^{t,x_i}(\eta)-\hat
 X_i^{t,x_i}(\eta)|^2]ds +C\e^2.
 \label{I-b-2}
\end{eqnarray}
In a similar fashion,  since $\sigma$ is Lipschitz in $(y,z)$, by  Assumption (\ref{cond3}-(ii)), we obtain
\begin{eqnarray}
I_\sigma &\le& C\int_t^T E\Big[\sup_{t\le \eta \le s}|X_i^{t,x_i}(\eta)-\hat X_i^{t,x_i}(\eta)|^2\Big]ds.
\end{eqnarray}
Therefore,
\begin{equation*}\begin{array}{lll}
E\big[\sup_{t\le \tau \le T}|X_i^{t,x_i}(\tau)-\hat X_i^{t,x_i}(\tau)|^2\big] \le C\int_t^T \Big[ E[\sup_{t\le \eta\le s }|X_i^{t,x_i}(\eta)-\hat  X_i^{t,x_i}(\eta)|^2]ds+C\e^2.
\end{array}
  \end{equation*}
The lemma follows from Gronwall's lemma.\qed

\begin{lemma} \label{lemma:xh}
We have
$$
E[\sup_{0\le s\le T} | \hat X_i(s)|^2 ]\le C E \Big[|\hat X_i(0)|^2+1\Big].
$$
\end{lemma}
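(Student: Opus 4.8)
The plan is to establish this as a routine a priori moment estimate, exploiting the fact that under Assumption~\ref{cond3}$(i)$ both $b$ and $\sigma$ are bounded, say by a constant $C$. First I would write the equilibrium dynamics of the $i$-th player in integral form,
\be
\hat X_i(\tau)=\hat X_i(0)+\int_0^\tau b\l(s,\hat X_i(s),E[\hat X_i(s)],\hat u_i(s)\r)ds+\int_0^\tau \sigma\l(s,\hat X_i(s),E[\hat X_i(s)]\r)dW_i(s),
\nonumber
\ee
for $\tau\in[0,T]$, take the supremum over $\tau\in[0,T]$, and apply the elementary inequality $|a+b+c|^2\le 3(|a|^2+|b|^2+|c|^2)$.

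The key steps are then to control the three resulting terms separately. The initial term contributes $3E[|\hat X_i(0)|^2]$ directly. For the drift term, since $|b|\le C$, the Cauchy--Schwarz inequality gives $\sup_{0\le \tau\le T}|\int_0^\tau b\,ds|^2\le C^2T^2$, whose expectation is a fixed constant. For the stochastic integral I would invoke the Burkholder--Davis--Gundy inequality exactly as in the proof of Lemma~\ref{lemma:m2bound}, together with $|\sigma|\le C$, to obtain
\be
E\Big[\sup_{0\le \tau\le T}\Big|\int_0^\tau \sigma\l(s,\hat X_i(s),E[\hat X_i(s)]\r)dW_i(s)\Big|^2\Big]\le C\,E\Big[\int_0^T |\sigma|^2\,ds\Big]\le C^2 T.
\nonumber
\ee
Collecting these three bounds yields $E[\sup_{0\le s\le T}|\hat X_i(s)|^2]\le C(E[|\hat X_i(0)|^2]+1)$, which is the claim after relabelling the generic constant.

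The point worth emphasizing is that, in contrast with the proof of Lemma~\ref{lemma:m2bound}, no Gronwall argument is needed here: the boundedness of $b$ and $\sigma$ in Assumption~\ref{cond3}$(i)$ makes each term on the right-hand side a fixed constant independent of the state, so the estimate closes immediately. The only mildly delicate point is the martingale term, but this is handled verbatim by the Burkholder--Davis--Gundy inequality already used above; so I expect no genuine obstacle. Had the coefficients instead been assumed only to have linear growth in $(y,z)$, one would keep the state-dependent bounds and close the inequality via Gronwall's lemma, precisely as in Lemma~\ref{lemma:m2bound}.
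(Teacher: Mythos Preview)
Your argument is correct. Under Assumption~\ref{cond3}$(i)$ the coefficients $b$ and $\sigma$ are bounded, and your direct estimate closes immediately without any recursion.

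The paper, however, does not argue this way: it invokes the Lipschitz condition on $b$ and $\sigma$ (bounded derivatives from Assumption~\ref{cond3}$(ii)$), bounds the integrands by $C(1+|\hat X_i(s)|+E|\hat X_i(s)|)$-type terms, obtains an integral inequality of Gronwall form
\[
E\Big[\sup_{0\le s\le T}|\hat X_i(s)|^2\Big]\le C\Big(E|\hat X_i(0)|^2+1+\int_0^T E\big[\sup_{0\le \eta\le s}|\hat X_i(\eta)|^2\big]\,ds\Big),
\]
and then applies Gronwall's lemma. In other words, the paper follows precisely the ``linear growth plus Gronwall'' route you describe in your final paragraph as the alternative. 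Your approach is shorter and exploits the full strength of Assumption~\ref{cond3}$(i)$; the paper's approach is more robust in that it would survive a weakening of the hypotheses to mere linear growth, at the cost of an extra step.
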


\proof We write
\begin{eqnarray}
\hat X_i(s)=\hat X_i(0) + \int_0^s b(\tau, \hat X_i(\tau), E[\hat X_i(\tau)], \hat u_i(\tau) )d\tau + \int_0^s \sigma(\tau, \hat X_i(\tau), E[\hat X_i(\tau)] )dW_i(\tau).
\end{eqnarray}
Then, by Burkholder-Davis-Gundy's inequality, we have
\begin{eqnarray*}
E[\sup_{0\le s\le T}|\hat X_i(s)|^2 ]&\le&
C\big(E|\hat X_i(0)|^2+
 E\Big[\int_0^T|b(s, \hat X_i(s), E[\hat X_i(s)], \hat u_i(s) )|ds\Big]^2\big)\\
& &+ CE \int_0^T |\sigma(s, \hat X_i(s), E[\hat X_i(s)] )|^2ds.
\end{eqnarray*}
By the Lipschitz condition on $b$ and $\sigma$ (their derivatives w.r.t $(y,z)$ being bounded), we further obtain
\begin{equation*}\begin{array}{lll}
E[\sup_{0\le s\le T}|\hat X_i(s)|^2] \le C\big(E|\hat X_i(0)|^2 +1 +\int_0^TE[\sup_{0\le  \eta\le s}|\hat X_i(\eta)|^2\big)
\end{array}
\end{equation*}
 which combined with Gronwall's lemma yields the desired estimate. \qed

\begin{corollary} \label{lemma:xx}
We have, for $N\ge 2$,
 $$
\sup_{0\le s\le T} E[|\hat X^{(-i)}(s)- \bar X(s)|^2 ]\le \frac{C}{N-1},
$$
where $C$ does not depend on $N$.
\end{corollary}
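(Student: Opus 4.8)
The plan is to exploit that, under Assumptions \ref{cond3} and \ref{cond4}, the closed-loop equilibrium states $\{\hat X_k,\ 1\le k\le N\}$ are i.i.d.\ processes, so that for each fixed $s$ the quantity $\hat X^{(-i)}(s)-\bar X(s)$ is an empirical average of $N-1$ centered, mutually independent random variables. The estimate then reduces to a direct second-moment (variance) computation combined with the uniform $L^2$ bound of Lemma \ref{lemma:xh}.

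First I would record that, since the $\hat X_k$ are identically distributed, $E[\hat X_k(s)]=E[\hat X_1(s)]=\bar X(s)$ for every $k$, the last equality being precisely the consistency condition \eqref{consis}. Writing $Y_k(s):=\hat X_k(s)-E[\hat X_k(s)]$, this gives
$$
\hat X^{(-i)}(s)-\bar X(s)=\frac{1}{N-1}\sum_{k\ne i} Y_k(s),
$$
a normalized sum of centered, mutually independent terms, where independence follows from the decentralized construction of the $\hat u_k$ (they depend on distinct Brownian motions $W_k$) together with Assumption \ref{cond4}.

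Next I would expand the square and take expectations. Because $E[Y_k(s)]=0$ and the $Y_k(s)$ are independent, all cross terms $E[Y_k(s)Y_l(s)]$ with $k\ne l$ vanish, leaving
$$
E\left[|\hat X^{(-i)}(s)-\bar X(s)|^2\right]=\frac{1}{(N-1)^2}\sum_{k\ne i}E[|Y_k(s)|^2]=\frac{1}{N-1}\,\mathrm{Var}(\hat X_1(s)),
$$
where the final equality uses that the $\hat X_k$ are identically distributed.

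Finally I would bound $\mathrm{Var}(\hat X_1(s))\le E[|\hat X_1(s)|^2]\le E[\sup_{0\le s\le T}|\hat X_1(s)|^2]$, which by Lemma \ref{lemma:xh} is at most $C(E|\hat X_1(0)|^2+1)=C(|x_0|^2+1)$, a constant independent of both $N$ and $s$. Taking the supremum over $s\in[0,T]$ yields the claim. There is essentially no serious obstacle here: the only points requiring care are the vanishing of the cross terms (which rests on Assumption \ref{cond4} ensuring identical laws, plus the independence of the driving noises $W_k$ that makes the $\hat X_k$ mutually independent) and the observation that the bound furnished by Lemma \ref{lemma:xh} is uniform in $s$, since it controls the supremum over the whole interval $[0,T]$.
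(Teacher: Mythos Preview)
Your proposal is correct and follows exactly the same approach as the paper: use Assumption \ref{cond4} to ensure the $\hat X_k$ are i.i.d., write $\hat X^{(-i)}(s)-\bar X(s)$ as an average of $N-1$ centered independent terms, and bound the common variance via Lemma \ref{lemma:xh}. The paper's own proof is a two-line sketch invoking precisely these two ingredients; you have simply written out the computation in full.
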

\proof Thanks to Assumption (\ref{cond4}),  $\hat X_1, \ldots \hat X_N$ are i.i.d. processes. The estimate follows from Lemma \ref{lemma:xh}. \qed

\subsection{Proof of Theorem \ref{theorem:asy}}

In order to estimate $ J^{i,N}(t,x_i; \hat u)-J^{i,N}(t,x_i; \hat u_{-i}, u_i^\e   )$, we introduce some notation.
Let
\begin{eqnarray*}
\Delta_h(s)&=&   h\left(s, \hat X^{t,x}_i(s), E [\hat  X^{t,x_i}_i(s)] , \hat X^{(-i)}(s), \hat u_i(s)\right)   \\
 & & \qquad - h\left(s,  X^{t,x_i}_i(s),  E[ X^{t,x_i}_i(s)] , \hat X^{(-i)}(s) , u_i^\e (s)\right), \\
\Delta_g &=&g\left(\hat X^{t,x_i}_i(T), E[\hat X^{t,x_i}_i(T)],  \hat X^{(-i)}(T)\right)- g\left( X^{t,x_i}_i(T), E [X^{t,x_i}_i(T)],  \hat X^{(-i)}(T)  \right).
\end{eqnarray*}
We have
\begin{eqnarray*}
\Delta_h(s)  &= &\l[ h\l(s,\hat X^{t,x_i}_i(s), E[\hat X^{t,x_i}_i(s)],  \bar X(s),\hat u_i(s) \r) -h\l( s,X^{t,x_i}_i(s), E[ X^{t,x_i}_i(s)],  \bar X(s),u_i^\e(s)  \r)\r]\\
  &  &  +\Big\{ \l[ h\l(s,\hat X^{t,x_i}_i(s), E[\hat X^{t,x_i}_i(s)],  \hat X^{(-i)}(s), \hat u_i(s)  \r)-h\l( s,X^{t,x_i}_i(s), E[X^{t,x_i}_i(s)],  \hat X^{(-i)}(s),  u_i^\e(s)  \r)\r]  \\
  &  & \qquad -\l[h\l(s,\hat X^{t,x_i}_i(s), E[\hat X^{t,x_i}_i(s)],  \bar X(s), \hat u_i(s) \r)- h\l(s, X^{t,x_i}_i(s), E[X^{t,x_i}_i(s)],  \bar X(s), u^\e_i(s) \r)  \r] \Big\}\\
& =:& \Delta_{h1}  +\Delta_{h2}.
\end{eqnarray*}
Similarly,
\begin{eqnarray*}
\Delta_g  &= &\l[g\l(\hat  X^{t,x_i}_i(T), E[\hat X^{t,x_i}_i(T)],  \bar X(T)\r)-g\l( X^{t,x_i}_i(T), E[X^{t,x_i}_i(T)],  \bar X(T) \r)  \r]\\
  &  &  +\Big\{ \l[g\l(\hat X^{t,x_i}_i(T), E[\hat X^{t,x_i}_i(T)],  \hat X^{(-i)}(T)\r)-g\l( X^{t,x_i}_i(T), E [X^{t,x_i}_i(T)],  \hat X^{(-i)}(T)  \r) \r]  \\
  &  & \qquad -\l[ g\l(\hat X^{t,x_i}_i(T), E[\hat X^{t,x_i}_i(T)],  \bar X(T) \r)-g\l( X^{t,x_i}_i(T), E [X^{t,x_i}_i(T)],  \bar X(T)\r)  \r] \Big\}\\
& =:& \Delta_{g1}  +\Delta_{g2}.
\end{eqnarray*}
Now, noting that
\be\label{static}
 E\left[\int_t^T \Delta_{h1}(s) ds + \Delta_{g1}\right]= \bar J^{i}(t,x_i; \hat u_i)-\bar J^{i}(t,x_i; u_i^\e ),
\ee
the cost difference satisfies
\begin{equation}\label{perf-diff}
J ^{i,N}(t,x_i;\hat u )-J^{i,N}(t,x_i;\hat u_{-i}, u_i^\varepsilon)
=\bar J^{i}(t,x_i; \hat u_i)-\bar J^{i}(t,x_i; u_i^\e )+E \l[\int_t^T \Delta_{h2}(s)ds+\Delta_{g2}\r].
\end{equation}
We proceed to estimate
$$
E\l[\int_t^T \Delta_{h2}(s)ds +\Delta_{g2}\r].
$$

\begin{lemma}\label{estimate:A}
We have
\be
\l|E\l [\int_t^T \Delta_{h2}(s)ds+\Delta_{g2}\r]\r|\le \frac{C\e}{\sqrt{N-1}}.
\ee
\end{lemma}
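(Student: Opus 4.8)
The plan is to exploit the \textbf{second-difference} (difference-of-differences) structure built into $\Delta_{h2}$ and $\Delta_{g2}$. Each of these terms is the gap between an $h$- (resp.\ $g$-) difference evaluated at the \emph{true} coupling $\hat X^{(-i)}$ and the same difference evaluated at the \emph{frozen} deterministic proxy $\bar X$. The whole point of the $\Delta_{h1}/\Delta_{h2}$ splitting is that the perturbation difference and the coupling difference now enter \emph{multiplicatively}, so the estimate should factor as (perturbation smallness)$\,\times\,$(mean-field smallness). Concretely, I would peel off the factor $|\hat X^{(-i)}-\bar X|$, which carries the $1/\sqrt{N-1}$ via Corollary \ref{lemma:xx}, and a factor controlled by $|\hat X_i - X_i|$, which carries the $\e$ via Lemma \ref{lemma:m2bound}, and then combine the two by Cauchy--Schwarz at each time $s$.

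First I would fix $s$ and write, with $w$ denoting the coupling argument of $h$,
\[
\Delta_{h2}(s)=\Phi_s\big(\hat X^{(-i)}(s)\big)-\Phi_s\big(\bar X(s)\big),\quad
\Phi_s(w):=h\big(s,\hat X_i,E[\hat X_i],w,\hat u_i\big)-h\big(s,X_i,E[X_i],w,u_i^\e\big).
\]
By the mean value theorem in $w$ one has $|\Delta_{h2}(s)|\le \sup_\theta|\Phi_s'(\theta)|\,|\hat X^{(-i)}(s)-\bar X(s)|$, where $\Phi_s'(\theta)=\partial_w h(s,\hat X_i,E[\hat X_i],\theta,\hat u_i)-\partial_w h(s,X_i,E[X_i],\theta,u_i^\e)$. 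Using that $\partial_w h$ is Lipschitz in $(y,z)$ and bounded (Assumption (\ref{cond3})$(iii)$), and that $\hat u_i=u_i^\e$ off $[t,t+\e]$, I would insert and subtract $\partial_w h(s,X_i,E[X_i],\theta,\hat u_i)$ to obtain, uniformly in $\theta$,
\[
|\Phi_s'(\theta)|\le C\big(|\hat X_i(s)-X_i(s)|+E[|\hat X_i(s)-X_i(s)|]+\ind_{[t,t+\e]}(s)\big).
\]

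Next I would integrate, take expectation, and apply Cauchy--Schwarz pointwise in $s$. The coupling factor satisfies $\big(E[|\hat X^{(-i)}(s)-\bar X(s)|^2]\big)^{1/2}\le C/\sqrt{N-1}$ by Corollary \ref{lemma:xx}. For the other factor, using $(a+b+c)^2\le 3(a^2+b^2+c^2)$, $E[|\cdot|]\le (E[|\cdot|^2])^{1/2}$, and Lemma \ref{lemma:m2bound}, one gets $\big(E[\Phi_s'(\theta)^2]\big)^{1/2}\le C\e+C\ind_{[t,t+\e]}(s)$. Since $\int_t^T\!\big(C\e+C\ind_{[t,t+\e]}(s)\big)ds\le C\e$, this yields $\big|E\!\int_t^T\Delta_{h2}(s)\,ds\big|\le C\e/\sqrt{N-1}$. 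The term $\Delta_{g2}$ is handled identically at $s=T$: since $g$ carries no control argument, the derivative difference is bounded purely by $C\big(|\hat X_i(T)-X_i(T)|+E[|\hat X_i(T)-X_i(T)|]\big)$, whose $L^2$ norm is $\le C\e$ by Lemma \ref{lemma:m2bound}; pairing with $\big(E[|\hat X^{(-i)}(T)-\bar X(T)|^2]\big)^{1/2}\le C/\sqrt{N-1}$ gives $|E[\Delta_{g2}]|\le C\e/\sqrt{N-1}$. Adding the two estimates proves the claim.

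The main obstacle is conceptual rather than computational: a naive bound would throw away either the $\e$ or the $1/\sqrt{N-1}$ and give only $O(\e)$ or $O(1/\sqrt{N-1})$. The crucial step is the mean-value expansion in the coupling variable, which makes the product structure explicit, and it genuinely relies on $\partial_w h$ being \emph{Lipschitz} in $(y,z)$ (not merely bounded), so that the derivative difference is controlled by the spike-perturbation smallness. The one point requiring care is the control-argument contribution, which is nonzero only on $[t,t+\e]$: there $\partial_w h$ can differ by an $O(1)$ amount, producing the extra $\ind_{[t,t+\e]}$, but this integrates to an additional $O(\e)$ that does not degrade the final rate.
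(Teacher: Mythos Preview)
Your proposal is correct and follows essentially the same route as the paper: the paper defines $\alpha(w)=h(s,\hat X_i,E[\hat X_i],w,\hat u_i)-h(s,X_i,E[X_i],w,u_i^\e)$, writes $\Delta_{h2}(s)=\lambda(s)\int_0^1\alpha_w(\bar X(s)+\theta\lambda(s))\,d\theta$ with $\lambda(s)=\hat X^{(-i)}(s)-\bar X(s)$, bounds $|\alpha_w|$ exactly as you do, and finishes by Cauchy--Schwarz together with Lemma~\ref{lemma:m2bound} and Corollary~\ref{lemma:xx}. The only cosmetic difference is that the paper uses the integral-remainder form of the mean value theorem where you take the supremum form, and it dismisses $\Delta_{g2}$ as ``similar'' rather than writing it out.
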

\begin{proof}
 We will only estimate $E\l [\int_t^T \Delta_{h2}(s)ds\r]$. The second term may be handled in a similar fashion.
Let
$$
\alpha(w):=h(s,\hat X^{t,x_i}_i(s), E[\hat X^{t,x_i}_i(s)],  w, \hat u_i(s))- h( s, X^{t,x_i}_i(s), E[ X^{t,x_i}_i(s)],  w,  u^\e_i(s) )
$$
and
$$
\lambda(s):=\hat X^{(-i)}(s)-\bar X(s).
$$
Then we have
\be \label{deh}
\Delta_{h2}(s)=\alpha(\hat X^{(-i)}(s))-\alpha(\bar X(s))= \lambda(s) \int_0^1\alpha_w \l(\bar X(s)+ \theta \l[\hat X^{(-i)}(s)-\bar X(s) \r] \r)d\theta.
\ee
Noting that  by Assumption (\ref{cond3}-(iii)) on $h$, we may perform  similar calculations leading to (\ref{I-b-1})  to obtain
$$
|\alpha_w(w)|\le  C  \l[|X^{t,x_i}_i(s) -\hat X^{t,x_i}_i(s)|+ E[ | X^{t,x_i}_i(s)-  \hat X^{t,x_i}_i(s)| ]+ \ind_{[t,t+\e]}(s)\r].
$$
Therefore,
\begin{equation*}
|\Delta_{h2}(s)|\leq C |\lambda(s)| \l[|X^{t,x_i}_i(s) -\hat X^{t,x_i}_i(s)|+ E[ | X^{t,x_i}_i(s)-  \hat X^{t,x_i}_i(s)| ]+\ind_{[t,t+\e ]}(s)\r].
\end{equation*}
Therefore, by the Cauchy-Schwarz inequality,  we get
\begin{eqnarray*}
 E\int_t^T|\Delta_{h2}(s)|ds &\le& C\int_t^T (E[ |\lambda(s)|^2])^{1/2}\left( (E [|X^{t,x_i}_i(s) -\hat X^{t,x_i}_i(s)|^2])^{1/2}  + \ind_{[t,t+\e]}(s)\right) ds \\
&\le & C (\sup_{0\le s\le T}E |\lambda(s)|^2)^{1/2} (E [\sup_{t\le s\le T} |X^{t,x_i}_i(s) -\hat X^{t,x_i}_i(s)|^2])^{1/2}+\e).
\end{eqnarray*}
 Subsequently by Lemma \ref{lemma:m2bound} and Corollary  \ref{lemma:xx},
$$
\l| E\l[ \int_t^T \Delta_{h2}(s)ds\r]\r|
\le \frac{C \varepsilon}{ \sqrt{N-1}}.
 $$
\end{proof}

Finally, we combine  Lemma \ref{estimate:A} and the relation (\ref{perf-diff}) to conclude

$$
J^{i,N}(t,x_i; \hat u ) - J^{i,N}(t,x_{i};  \hat u_{-i}, u_i^\varepsilon  )=\bar J^i(t,x_i; \hat u_i ) - \bar J^i(t,x_{i};  u_i^\varepsilon  )+O\l(\frac{\e}{\sqrt{N-1}}\r).
$$
Furthermore, since $\hat u$ is determined by \eqref{ppH}-\eqref{consis},
$$
\lim_{\varepsilon\downarrow 0}\frac{\bar J^i(t,x_i; \hat u_i ) -\bar  J^i(t,x_{i}; u_i^\varepsilon  ) }{\varepsilon}\le 0,
$$
we finally get
$$
\lim_{\varepsilon\downarrow 0}\frac{J^{i,N}(t,x_i; \hat u ) - J^{i,N}(t,x_i;  \hat u_{-i}, u_i^\varepsilon  ) }{\varepsilon}\le\frac{ C}{\sqrt{N-1}}.
$$
This completes the proof of Theorem \ref{theorem:asy}. \qed

\section{A mean field LQG game}
\label{sec:mflq}

Consider a system of $N$ players. The dynamics  of the $i$-th player is given by \begin{equation}
dX_i(t)= (a X_i(s)+bu_i(s))ds +\sigma dW_i(s), \quad 1\leq i\leq N. \label{Xi}
\end{equation}
Denote $x=(x_1, \ldots, x_N)$ and $u=(u_1, \ldots, u_N)$. Its cost  functional at time
$t$ is
\begin{equation}
J^{i,N}(t, x_i;u)= \frac{1}{2} E \l[ \int_t^T u_i^2(s)ds \r]+\frac{\gamma}{2}
 E\left[X^{t,x_i}_i(T)-\Gamma_1 x_i-\Gamma_2 X^{(-i)}(T)\right]^2,\nonumber
\end{equation}
where $X^{(-i)}(t)=\frac{1}{N-1}\sum_{k\ne i}^N X_k(t)$. We take $\Gamma_1\neq 0$ and $\Gamma_2\neq 0$. A simple interpretation of the terminal cost is that each agent wants to adjust its terminal state based on its current state and also the mean field term $X^{(-i)}$ at time $T$.
 The cost  functional is time inconsistent.
Below, we will apply a consistent mean field approximation to construct a limiting  control problem.

 Following the scheme in Section \ref{sec:game},
 we introduce $\bar X_T$ as an approximation of $X^{(-i)}(T)$.
The new cost functional is
$$
\bar J^{i}(t, x_i;u_i)= \frac{1}{2} E\l[\int_t^T u_i^2(s)ds\r] +\frac{\gamma}{2} E
\left[X^{t,x_i}_i(T)-\Gamma_1 x_i-\Gamma_2 \bar X_T\right]^2.\nonumber
$$
This is a time-inconsistent control problem.
The same approach as in Section \ref{sec:lqr} can be applied.
The adjoint equation now reads
$$
\left\{
\begin{array}{l}
dp^{t,x_i}(s)= -a p^{t,x_i}(s) ds +q^{t,x_i}(s) dW_i(s),\\
p^{t,x_i}(T)= \gamma ( \Gamma_1 x_i+\Gamma_2 \bar X_T-X^{t,x_i}_i(T)).
\end{array}
\right.
$$

We look for a solution of the form
$$
p^{t,x_i}(s)= \b_s (\Gamma_1x_i+\Gamma_2 \bar X_T)-\alpha_s X^{t,x_i}_i(s) .
$$
The same set of ODEs is obtained as in Section \ref{sec:lqr}.
The equilibrium strategy  is given in the feedback form
\begin{equation}
\hat u_i(t)=b p^{t,x_i}(t)= -b (\a_t-\b_t\Gamma_1) x_i+b \b_t\Gamma_2 \bar X_T \label{ulqg}
\end{equation}
when the current state is $x_i$.
The closed loop equilibrium dynamics  of the $i$-th player is
\begin{equation}
d\hat X_i(s) =[a-b^2(\a_s-\b_s\Gamma_1)]\hat X_i(s)ds +b \b_s \Gamma_2\bar X_T ds +\sigma dW_i(s).  \label{xisg}
\end{equation}

Finally, we impose the consistency requirement. Assume all players have the same initial condition $y_0$, and so $\bar X_T$  can  be obtained as $E\hat X_i(T)$.
Now we take expectation in \eqref{xisg} to construct the ODE
$$
\dot m(s)=[a-b^2(\a_s-\b_s\Gamma_1)]m(s) +b \b_s \Gamma_2\bar X_T,\quad m(0)=y_0.  $$
By obvious notation for the transition function $\Phi$, we write the solution of the ODE as
$$
m(t)= \Phi(t,0)y_0+ \int_0^t \Phi(t,s) b \b_s \Gamma_2\bar X_T ds.
$$

Now the consistency condition for $\bar X$ becomes
$$
\bar X_T=  \Phi(T,0)y_0+ \int_0^T \Phi(T,s) b \b_s \Gamma_2\bar X_T ds.
$$
For this approach to have a solution for any given $y_0$, we need
\begin{equation}
 b  \Gamma_2\int_0^T \Phi(T,s) \b_sds \neq 1. \label{bga}
\end{equation}
If \eqref{bga} holds, we can solve $\bar X_T$ first and next determine the strategy \eqref{ulqg}.

\subsection{The performance difference}

Suppose \eqref{bga} holds.
For the performance estimate, we consider  the following set of admissible strategies
$$
 \mathcal{U}_0[0,T]:=\Big\{u: [0,T]\times \Omega\longrightarrow {\dbR}; \,\, u\, \mbox{ is}\,\,  \dbF\mbox{-adapted}, E[\esssup_{0\le s\le T}|u(s)|^2]<\infty \Big\},
$$
which is smaller than ${\cal U}[0,T]$.
The costs associated with $\hat u$ and $(u_i, \hat u_{-i})$ are, respectively, given by
\begin{eqnarray*}
J^{i,N}(t,x_i; \hat u) =\frac{1}{2} E\l[ \int_t^T \hat u_i^2(s)ds\r] +\frac{\gamma}{2} E
\left[\hat X^{t,x_i}_i(T)-\Gamma_1 x_i-\Gamma_2  \hat X^{(-i)}(T)\right]^2, \\
J^{i,N}(t,x_i;  u_i, \hat u_{-i}) = \frac{1}{2} E\l[ \int_t^T u_i^2(s)ds\r] +\frac{\gamma}{2} E
\left[X^{t,x_i}_i(T)-\Gamma_1 x_i-\Gamma_2 (\hat X^{(-i)}(T))\right]^2.
\end{eqnarray*}
The difference can be written as
\begin{eqnarray*}
J^{i,N}(t,x_i;  u_i, \hat u_{-i})-J^{i,N}(t,x_i; \hat u)& =& \frac{1}{2} E\l[ \int_t^T u_i^2(s)ds\r] +\frac{\gamma}{2} E
\left[X^{t,x_i}_i(T)-\Gamma_1 x_i-\Gamma_2 \bar X_T\right]^2\\
& &- \frac{1}{2} E\l[ \int_t^T \hat u_i^2(s)ds\r] -\frac{\gamma}{2} E
\left[\hat X^{t,x_i}_i(T)-\Gamma_1 x_i-\Gamma_2 \bar X_T\right]^2\\
 & & +d_{N},
\end{eqnarray*}
where
\begin{equation*}
d_N = \gamma \Gamma_2 E\[(\hat X^{t,x_i}_i(T)-X^{t,x_i}_i(T))(\hat X^{(-i)}(T)-\bar X_T)\].
  \end{equation*}
For any fixed $u_i\in {\cal U}_0[0,T],$ we can still prove Lemma \ref{lemma:m2bound}.  Corollary \ref{lemma:xx} also holds for $\hat u_j$, $1\le j\le N$.  We have
\begin{eqnarray*}
|d_N| &\le& \gamma\Gamma_2 (E|\hat X^{t,x_i}_i(T)-X^{t,x_i}_i(T)|^2)^{1/2} (E|\hat X^{(-i)}(T)-\bar X_T|^2)^{1/2} \\
&\le & \frac{C\varepsilon}{\sqrt{N-1}},
\end{eqnarray*}
where $C$ may depend on $u_i$. If $u_i\in {\cal U}[0,T]$ were considered, we would be unable to obtain the second inequality above.
 Finally,
\begin{equation*}
\lim_{\varepsilon\downarrow 0}\frac{J^{i,N}(t,x_i; \hat u)-J^{i,N}(t,x_i;  u_i, \hat u_{-i})}{\varepsilon}\le \frac{C}{\sqrt{N-1}}.
\end{equation*}
Thus, $\hat u$ is an asymptotic sub-game perfect Nash equilibrium point.
  \qed



\begin{thebibliography}{99}


\bibitem{AD} Andersson, D. and Djehiche, B. (2010): {\it A maximum principle for SDE's of mean-field type.} Appl. Math. Optim., {\bf 63}(3), 341-356.




\bibitem{B12}
Bardi, M. (2012): {\it Explicit solutions of some linear-quadratic mean field games},  Netw. Heterogeneous Media, {\bf 7}(2),  243-261.






\bibitem{BFY12}
 Bensoussan, A.,  Frehse, J., and  Yam P. (2012): {\it Overview on Mean Field Games and Mean Field Type Control Theory}, SpringerBriefs in Mathematics (to appear).



\bibitem{BSY} Bensoussan A., Sung,         K.C.J.,  and Yam, S.C.P. (2013):  {\it Linear-quadratic time-inconsistent mean field games}. Dynamic Games Appl., {\bf 3}(4), 537-552.


\bibitem{BSYY} Bensoussan, A., Sung, K.C.J., Yam, S.C.P., and Yung, S.P. (2011): {\it Linear-quadratic mean-field games}. Preprint.







\bibitem{BM} Bj\"ork, T. and Murgoci, A. (2008): {\it A general theory of Markovian time inconsistent stochastic control problems}. SSRN:1694759.

\bibitem{BMZ} Bj\"ork, T., Murgoci, A., and  Zhou, X.Y. (2014): {\it Mean-variance portfolio optimization with state-dependent risk aversion}.   Mathematical Finance, {\bf 24}(1), 1-24.

\bibitem{BCQ10}
Buckdahn, R., Cardaliaguet, P. and  Quincampoix, M. (2011):
{\it Some recent aspects of differential game theory}. Dynamic Games and Appl., {\bf 1}(1), 74-114.

\bibitem{BL} Buckdahn, R. and Li, J. (2008): {\it Stochastic differential games and viscosity solutions of Hamilton-Jocobi-Bellman-Isaacs equations}, SIAM J. Control Optim., {\bf 47}(1), 444-475.

\bibitem{BLP} Buckdahn, R., Li, J. and Peng, S. (2009): {\it Mean-field backward stochastic differential equations and related partial differential equations}. Stoch. Proc. and  Appl.,  {\bf 119}(10), 3133-3154.


\bibitem{BDL} Buckdahn, R., Djehiche, B., and Li, J. (2011): {\it A general stochastic maximum principle for SDEs of mean-field type.} Appl. Math. Optim. {\bf 64}(2), 197-216.



\bibitem{CD2012} Carmona, R. and Delarue, F. (2013): {\it Probabilistic analysis of mean-field games}. SIAM J. Control Optim., {\bf 51}(4), 2705-2734.

\bibitem{EL} Ekeland, I. and Lazrak, A. (2006): {\it Being serious about non-commitment: subgame perfect equilibrium in continuous time.} arXiv:math/0604264.

\bibitem{EP} Ekeland, I. and Pirvu, T.A. (2008): {\it Investment and consumption without commitment.}  Mathematics and Financial Economics, {\bf  2}, 57-86.


\bibitem{ELN13}
Elliott, R.J., Li, X., and Ni, Y.-H. (2013): {\it Discrete time mean-field stochastic linear-quadratic optimal control problems}. Automatica, {\bf 49}(11),  3222-3233.


\bibitem{Goldman} Goldman, S. M. (1980): {\it Consistent plans}. Rev. Financial Stud., {\bf 47}, 533-537.

\bibitem{GMS10}
Gomes, D.A., Mohr, J. and Souza, R.R. (2010):
{\it Discrete time, finite state space mean field games.}  J. Math.
Pures Appl.,  {\bf 93}, 308-328.

\bibitem{HJZ} Hu, Y., Jin, H., and Zhou, X.Y. (2012): {\it Time-inconsistent
stochastic  linear-quadratic control.} SIAM J. Control Optim. {\bf 50}(3), 1548-1572.


\bibitem{H10}
Huang, M. (2010): {\it Large-population LQG games involving a major player: the
Nash certainty equivalence principle.}  SIAM J. Control Optim.,
{\bf 48}(5), 3318-3353.

\bibitem{HCM03}
Huang, M., Caines, P.E., and Malham\'e, R.P. (2003): {\it Individual and mass
behaviour in large population stochastic wireless power control
problems: centralized and Nash equilibrium solutions.}
Proc. 42nd IEEE CDC, Maui, HI,   98-103.



\bibitem{HMC06}
Huang, M., Malham\'e, R.P., and Caines, P.E. (2006): {\it Large population stochastic dynamic games: Closed-loop McKean-Vlasov systems and the Nash certainty equivalence principle}. Communications in Information and Systems, {\bf 6}(3),  221-251.

\bibitem{HCM07}
Huang, M., Caines, P.E. and Malham\'e, R.P. (2007): {\it Large-population
cost-coupled LQG problems with nonuniform agents: individual-mass
behavior and decentralized $\varepsilon$-Nash equilibria.}  IEEE
Trans. Autom. Control,  {\bf 52}(9),  1560-1571.










\bibitem{KS}  Karatzas, I. and  Shreve, S.E. (1987): {\it Brownian Motion and Stochastic Calculus}, Springer-Verlag, New York.


\bibitem{KLY11}
Kolokoltsov, V.N.,  Li, J., and Yang, W. (2011): {\it  Mean field games and nonlinear Markov processes}.
Preprint.

\bibitem{LL07}
Lasry, J.-M. and Lions, P.-L. (2007): {\it Mean field games.} Japan. J.
Math.,  {\bf 2}(1), 229-260.

\bibitem{LZ08}
Li, T. and Zhang, J.-F. (2008): {\it Asymptotically optimal decentralized control
for large population stochastic multiagent systems.} IEEE Trans.
Automat. Control,  {\bf 53}(7),  1643-1660.


\bibitem{NC12}
 Nourian, M. and Caines, P.E. (2013):
 {\it $\epsilon$-Nash mean field game theory  for nonlinear stochastic dynamical systems
 with major and minor agents.}
  SIAM J. Control  Optim., {\bf 51}(4), 3302-3331.



\bibitem{Peleg-1} Peleg, B. and Menahem, E.Y. (1973): {\it  On the existence of a consistent course of action when tastes are changing}. Rev. Financial Stud., {\bf 40}, 391-401.



\bibitem{P} Peng, S. (1990): {\it A general stochastic maximum principle for optimal control problems.} SIAM J. Control and Optimization,  {\bf 28}(4), 966-979.

\bibitem{Peng07} Peng, S. (1997): {\it BSDE and stochastic optimizations}, in Topics in Stochastic Analysis, J. Yan, S. Peng, S. Fang, and L. Wu, eds., Science Press, Beijing, 1997 (in Chinese).


\bibitem{Phelps} Phelps, E.S. and Pollak, R.A. (1968): {\it On second-best national saving and
game-equilibrium growth.}  Review of Economic Studies, {\bf 35}, 185-99.

\bibitem{Poll} Pollak, R.A. (1968): {\it Consistent planning}. Rev. Financial Stud., {\bf 35}, 185-199.

\bibitem{Stoltz} Strotz R. (1955): {\it Myopia and inconsistency in dynamic utility maximization}. Rev. Financial Stud., {\bf 23}, 165-180.

\bibitem{TZB11}
Tembine, H., Zhu, Q. and Basar, T. (2011): {\it Risk-sensitive
mean-field stochastic differential games.}  Proc. 18th IFAC World Congress,
Milan, Italy.

\bibitem{WBR08}
Weintraub, G.Y., Benkard, C.L., and Van Roy, B. (2008): {\it Markov perfect
industry dynamics with many firms}.  Econometrica, {\bf 76}(6),
 1375-1411.

\bibitem{Y} Yong, J. (2013a): {\it  Linear-quadratic optimal control problems for mean-field stochastic differential equations.} SIAM J. Control Optim., {\bf 51}(4), 2809-2838.

\bibitem{Y13}
Yong, J. (2013b): {\it  Linear-quadratic optimal control problems for mean-field stochastic differential equations: Time-consistent solutions.} Preprint.

\bibitem{YZ} Yong, J. and  Zhou, X.Y.(1999): {\it Stochastic Controls: Hamiltonian Systems and HJB Equations.} Springer-Verlag, New York.

\bibitem{Zacc} Zaccour, G. (2008): {\it Time consistency in cooperative differential games: A tutorial.} INFOR,  {\bf 46}(1), 81-92.





















%

















\end{thebibliography}
\end{document}